\newtheorem{theorem}{Theorem}[section]
\newtheorem{definition}[theorem]{Definition}
\newtheorem{proposition}[theorem]{Proposition}
\begin{document}

\title[Transitive quantum groups]{Higher transitive quantum groups: theory and models}

\author{Teodor Banica}
\address{T.B.: Department of Mathematics, University of Cergy-Pontoise, F-95000 Cergy-Pontoise, France. {\tt teo.banica@gmail.com}}

\begin{abstract}
We investigate the notion of $k$-transitivity for the quantum permutation groups $G\subset S_N^+$, with a brief review of the known $k=1,2$ results, and with a study of what happens at $k\geq3$. We discuss then matrix modelling questions for the algebras $C(G)$, notably by introducing the related notions of double and triple flat matrix model. At the level of the examples, our main results concern the quantum groups coming from the complex Hadamard matrices, and from the Weyl matrices.
\end{abstract}

\subjclass[2010]{46L65 (60B15)}
\keywords{Quantum permutation, Matrix model}

\maketitle

\section*{Introduction}

One interesting question is that of finding matrix models for the coordinates of various quantum groups, or more general noncommutative manifolds. The case of the quantum permutation groups $G\subset S_N^+$ is of particular interest, due to the rich structure of such quantum groups, and to the variety of modelling methods which can be employed.

Such questions have been systematically investigated in the last years, starting with \cite{ban}, \cite{bne}, with some key advances being recently obtained in \cite{bch}, \cite{bfr}. The present paper continuates this work. According to \cite{bch}, \cite{bne}, at the ``center'' of this modelling theory are the flat models $\pi:C(G)\to M_N(C(X))$ for the transitive subgroups $G\subset S_N^+$. We will refine here this point of view, by taking into account the higher transitivity properties of $G$. At the level of the examples, our main results will concern the quantum groups coming from the complex Hadamard matrices, and from the Weyl matrices.

The paper is organized as follows: sections 1-2 contain various preliminaries and generalities, in 3-4 we discuss the notion of higher transitivity for the quantum groups, and in 5-6 we discuss random matrix models, and the notion of higher flatness for them.

\medskip

\noindent {\bf Acknowledgements.} I would like to thank A. Chirvasitu for useful discussions.
 
\section{Quantum permutations}

We recall that a magic unitary matrix is a square matrix $u\in M_N(A)$ over an abstract $C^*$-algebra, all whose entries are projections ($p=p^*=p^2$), summing up to 1 on each row and each column. The following key definition is due to Wang \cite{wa2}:

\begin{definition}
The quantum permutation group $S_N^+$ is the abstract spectrum, in the sense of the general $C^*$-algebra theory, of the universal algebra
$$C(S_N^+)=C^*\left((u_{ij})_{i,j=1,\ldots,N}\Big|u={\rm magic}\right)$$
with compact quantum group structure coming from the maps
$$\Delta(u_{ij})=\sum_ku_{ik}\otimes u_{kj}\quad,\quad\varepsilon(u_{ij})=\delta_{ij}\quad,\quad S(u_{ij})=u_{ji}$$
which are respectively the comultiplication, counit and antipode of $C(S_N^+)$.
\end{definition}

As a first remark, the algebra $C(S_N^+)$ is indeed well-defined, because the magic condition forces $||u_{ij}||\leq1$, for any $C^*$-norm. By using the universal property of this algebra, we can define maps $\Delta,\varepsilon,S$ as above. We obtain in this way a Hopf $C^*$-algebra in the sense of Woronowicz \cite{wo1}, \cite{wo2}, satisfying the extra assumption $S^2=id$. Thus, according to \cite{wo1}, \cite{wo2}, the abstract spectrum $S_N^+$ is indeed a compact quantum group. See \cite{wa2}.

The terminology comes from the following result, also from \cite{wa2}:

\begin{proposition}
The quantum permutation group $S_N^+$ acts on the set $X=\{1,\ldots,N\}$, the corresponding coaction map $\Phi:C(X)\to C(S_N^+)\otimes C(X)$ being given by:
$$\Phi(\delta_i)=\sum_ju_{ij}\otimes\delta_j$$
In addition, $S_N^+$ is in fact the biggest compact quantum group acting on $X$.
\end{proposition}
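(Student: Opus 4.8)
The plan is to prove the two assertions in turn: that $\Phi$ is a genuine coaction, and that it is the universal one.

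For the first part, recall that $C(X)=\mathbb{C}^N$ is the universal $C^*$-algebra generated by orthogonal projections $\delta_1,\dots,\delta_N$ with $\sum_i\delta_i=1$, so it suffices to check that the elements $\Phi(\delta_i)=\sum_j u_{ij}\otimes\delta_j$ are orthogonal projections summing to $1\otimes1$. Self-adjointness and $\Phi(\delta_i)^2=\Phi(\delta_i)$ follow from $u_{ij}=u_{ij}^*=u_{ij}^2$ together with $\delta_j\delta_k=\delta_{jk}\delta_j$; the relation $\Phi(\delta_i)\Phi(\delta_k)=\sum_j u_{ij}u_{kj}\otimes\delta_j=0$ for $i\neq k$ holds because the entries on a fixed column of a magic matrix are projections summing to $1$, hence pairwise orthogonal; and $\sum_i\Phi(\delta_i)=\sum_j\bigl(\sum_i u_{ij}\bigr)\otimes\delta_j=1\otimes1$ by the column sums. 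The coaction axioms are then formal: $(\Delta\otimes\mathrm{id})\Phi=(\mathrm{id}\otimes\Phi)\Phi$ is immediate from $\Delta(u_{ij})=\sum_k u_{ik}\otimes u_{kj}$, and $(\varepsilon\otimes\mathrm{id})\Phi=\mathrm{id}$ from $\varepsilon(u_{ij})=\delta_{ij}$, while the Podle\'s density condition is easily verified, using the identity $\sum_i u_{il}u_{ik}=\delta_{kl}1$.

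For the universality, let $G'$ be a compact quantum group acting on $X$ via a coaction $\Psi\colon C(X)\to C(G')\otimes C(X)$, and write $\Psi(\delta_i)=\sum_j v_{ij}\otimes\delta_j$ with $v_{ij}\in C(G')$. Reading the computations above backwards, the fact that $\Psi$ is a unital $*$-homomorphism forces $v_{ij}=v_{ij}^*=v_{ij}^2$, $v_{ij}v_{kj}=\delta_{ik}v_{ij}$ and $\sum_i v_{ij}=1$; that is, the $v_{ij}$ are projections with orthogonal columns summing to $1$, equivalently $v^t v=1_N$ in $M_N(C(G'))$. Coassociativity and counitality of $\Psi$ say exactly that $\Delta_{G'}(v_{ij})=\sum_k v_{ik}\otimes v_{kj}$ and $\varepsilon_{G'}(v_{ij})=\delta_{ij}$, i.e.\ that $v$ is a finite-dimensional corepresentation of $G'$; its entries therefore lie in the dense Hopf $*$-subalgebra of $C(G')$, the antipode applies to them, and the antipode axioms give $\sum_k v_{ik}S_{G'}(v_{kj})=\sum_k S_{G'}(v_{ik})v_{kj}=\delta_{ij}1$, so that $v$ is invertible in $M_N(C(G'))$, with inverse $(S_{G'}(v_{ij}))_{ij}$. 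Since a one-sided inverse of an invertible element is automatically the inverse, $v^t=v^{-1}$, whence $vv^t=1_N$ as well, which says that the rows of $v$ are projections summing to $1$. Hence $v$ is a magic unitary, and the universal property of $C(S_N^+)$ produces a $*$-homomorphism $\rho\colon C(S_N^+)\to C(G')$ with $\rho(u_{ij})=v_{ij}$; it is a morphism of compact quantum groups, since $\Delta_{G'}\rho=(\rho\otimes\rho)\Delta$ on the generators, and it intertwines the coactions, $(\rho\otimes\mathrm{id})\Phi=\Psi$. Replacing $G'$ by the sub-quantum-group generated by the $v_{ij}$ makes $\rho$ surjective, which is the precise content of the assertion that $S_N^+$ is the biggest compact quantum group acting on $X$.

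The verification that $\Phi$ is a coaction is routine. The one step that is not purely formal is obtaining the row conditions for the matrix $v$ in the universality argument: the column conditions, including $v^t v=1_N$, fall out directly from $\Psi$ being a $*$-homomorphism, but the identity $vv^t=1_N$ cannot be seen this way, since in $M_N$ of a noncommutative algebra a left inverse need not be a right inverse; it genuinely requires the compact quantum group structure of $G'$, in the form of the antipode, which forces the corepresentation $v$ to be invertible. I expect this to be the main obstacle, everything else being bookkeeping.
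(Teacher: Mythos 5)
The first half of your argument (that $\Phi$ is a well-defined coaction, including the density condition) is fine, and your overall plan is the same as the paper's, whose proof is essentially a reduction to Wang's theorem that $\Phi(\delta_i)=\sum_j u_{ij}\otimes\delta_j$ is a coaction precisely when $u$ is a magic corepresentation. The universality half, however, breaks at exactly the step you yourself flag as the crux. From $\Psi$ being a unital $*$-homomorphism you correctly get that each column of $v$ is a partition of unity: $v_{ij}=v_{ij}^*=v_{ij}^2$, $v_{ij}v_{kj}=\delta_{ik}v_{ij}$, $\sum_i v_{ij}=1$. But this is \emph{not} equivalent to $v^tv=1_N$: the entry $(v^tv)_{jl}=\sum_i v_{ij}v_{il}$ is a sum of products of two entries lying in the same \emph{row}, about which the column relations say nothing. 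What the column relations actually give is $(vv^t)_{ik}=\sum_j v_{ij}v_{kj}=\delta_{ik}\sum_j v_{ij}$, i.e. $vv^t=\mathrm{diag}(r_1,\ldots,r_N)$ with $r_i=\sum_j v_{ij}$ the row sums -- and proving $r_i=1$ is precisely the nontrivial content of magicness. For instance $v=\begin{pmatrix}p&p\\1-p&1-p\end{pmatrix}$ with $p$ a projection has both columns partitions of unity, yet $v^tv\neq1_2$. Consequently the pivot of your argument -- ``$v^t$ is a one-sided inverse of the invertible matrix $v$, hence equals $v^{-1}$, hence $vv^t=1_N$'' -- has no one-sided inverse to start from, and the row conditions are never obtained.

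The gap is not repaired by what you already have: knowing that $v$ is invertible with inverse $(S_{G'}(v_{ij}))_{ij}$ and that $vv^t$ is diagonal does not force the diagonal to be $1$; note also that over a noncommutative algebra transposition is not an antiautomorphism of $M_N$, so invertibility of $v$ does not even give invertibility of $v^t$. Obtaining the row relations genuinely requires a further Hopf-algebraic argument, and this is what the proof in Wang's paper \cite{wa2} (to which the paper defers, its own proof being little more than a citation) supplies -- for example by exploiting the relations satisfied by the inverse matrix $(S_{G'}(v_{ij}))$ together with the $*$-structure, or equivalently by showing that any such coaction automatically preserves the counting measure on $X$, which is exactly the statement $\sum_j v_{ij}=1$. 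A smaller point in the same direction: your assertion that the $v_{ij}$ lie in the dense Hopf $*$-subalgebra, so that $\varepsilon_{G'}$ and $S_{G'}$ may be applied to them, is itself not purely formal (it uses nondegeneracy/Podle\'s density of $\Psi$), so it should be treated as part of what needs proof rather than a consequence of coassociativity alone.
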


\begin{proof}
Given a compact quantum group $G$ in the sense of Woronowicz \cite{wo1}, \cite{wo2}, it is straightforward to check that the formula $\Phi(\delta_i)=\sum_ju_{ij}\otimes\delta_j$ defines a morphism of algebras, which is in addition a coaction map, precisely when the matrix $u=(u_{ij})$ is a magic corepresentation of $C(G)$. But this gives all the assertions. See \cite{wa1}, \cite{wa2}.
\end{proof}

In practice, it is useful to think of $S_N^+$ as being a ``liberation'' of $S_N$, viewed as an algebraic group, $S_N\subset O_N$, via the standard permutation matrices, as follows:

\begin{proposition}
We have a quantum group embedding $S_N\subset S_N^+$, whose transpose quotient map $C(S_N^+)\to C(S_N)$ maps standard coordinates to standard coordinates,
$$u_{ij}\to\chi\left(\sigma\in S_N\Big|\sigma(j)=i\right)$$
and whose kernel is the commutator ideal of $C(S_N^+)$. This embedding is an isomorphism at $N=2,3$, but not at $N\geq4$, where $S_N^+$ is both non-classical, and infinite.
\end{proposition}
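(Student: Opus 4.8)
The plan is to realise the quotient map $C(S_N^+)\to C(S_N)$ explicitly from a magic unitary over $C(S_N)$, to pin down its kernel by a universal-property argument, and then to settle the ranges $N\le3$ and $N\ge4$ separately. For the embedding, set $v_{ij}\in C(S_N)$ by $v_{ij}(\sigma)=\delta_{i,\sigma(j)}$. Each $v_{ij}$ takes values in $\{0,1\}$, so is a projection, and for a fixed $\sigma$ there is exactly one $j$ with $\sigma(j)=i$ and exactly one $i$ with $\sigma(j)=i$; hence $v=(v_{ij})$ is a magic unitary over the commutative algebra $C(S_N)$. Using $(\Delta f)(\sigma,\tau)=f(\sigma\tau)$ one checks $\Delta(v_{ij})=\sum_k v_{ik}\otimes v_{kj}$, and likewise $\varepsilon(v_{ij})=\delta_{ij}$ and $S(v_{ij})=v_{ji}$. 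By the universal property defining $C(S_N^+)$ there is then a $*$-homomorphism $C(S_N^+)\to C(S_N)$ with $u_{ij}\mapsto v_{ij}$; it is onto because the $v_{ij}$ generate $C(S_N)$, and it intertwines $\Delta,\varepsilon,S$, so it is a morphism of compact quantum groups, i.e. the claimed embedding $S_N\subset S_N^+$ with the stated effect on standard coordinates.

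For the kernel, let $I\subset C(S_N^+)$ be the commutator ideal. Since $C(S_N)$ is commutative, $I\subset\ker$, so the map factors through $C(S_N^+)/I$, which is the universal commutative $C^*$-algebra generated by the entries of an $N\times N$ magic unitary; this is $C(X_N)$, where $X_N$ is the set of magic matrices with complex scalar entries. But a projection in $\mathbb C$ equals $0$ or $1$, so $X_N$ is precisely the set of $N\times N$ permutation matrices, a finite set of cardinality $N!$. Hence $\dim_{\mathbb C}C(S_N^+)/I=N!=\dim_{\mathbb C}C(S_N)$, and as the induced map $C(S_N^+)/I\to C(S_N)$ is onto between algebras of the same finite dimension it is an isomorphism; therefore $\ker=I$.

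For $N=2,3$ the claim is that $C(S_N^+)$ is commutative, equivalently $I=0$. At $N=2$ a magic unitary is $\bigl(\begin{smallmatrix}p&1-p\\1-p&p\end{smallmatrix}\bigr)$ for one projection $p$, so $C(S_2^+)$ is commutative. At $N=3$ one runs the classical short argument from the magic relations: entries on a common row or column are orthogonal, and combining this with $\sum_ju_{ij}=\sum_iu_{ij}=1$ gives $u_{ij}u_{kl}=u_{kl}u_{ij}$ for all indices, so $C(S_3^+)$ is commutative; in either case the surjection onto $C(S_N)$ is then a map between commutative algebras of dimension $N!$, hence an isomorphism.

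For $N\ge4$, conversely, it is enough to build a magic unitary of size $N$ over some noncommutative infinite-dimensional $C^*$-algebra with noncommuting entries: picking self-adjoint unitaries $s,t$ generating $\mathbb Z_2*\mathbb Z_2$ and putting $p=(1+s)/2$, $q=(1+t)/2$, which are noncommuting projections generating the infinite-dimensional algebra $C^*(\mathbb Z_2*\mathbb Z_2)$, the matrix
$$u=\begin{pmatrix}p&1-p&0&0\\1-p&p&0&0\\0&0&q&1-q\\0&0&1-q&q\end{pmatrix}\oplus1_{N-4}$$
is magic of size $N$, so it induces a surjection from $C(S_N^+)$ onto a noncommutative infinite-dimensional algebra; hence $C(S_N^+)$ is itself noncommutative and infinite-dimensional, and since its abelianization is $C(S_N)$ this forces $S_N\subsetneq S_N^+$ with $S_N^+$ infinite. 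I expect the only genuinely computational step to be the commutativity of $C(S_3^+)$ — elementary, but needing the row and column relations applied in the right order — while everything else is universal-property bookkeeping together with the standard infinite-dihedral model at $N=4$; I would single out these two points as the crux of the argument.
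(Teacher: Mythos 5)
Your proposal is correct and follows essentially the same route as the paper: the same block-diagonal magic unitary built from two noncommuting $2\times2$ magic blocks (i.e. over $C^*(\mathbb Z_2*\mathbb Z_2)$) for $N\geq4$, and the kernel identified by the ``direct'' abelianization argument (scalar magic matrices are exactly permutation matrices) that the paper itself alludes to as an alternative to invoking its Proposition 1.2. The only step you leave as a sketch, the commutativity of $C(S_3^+)$, is likewise deferred by the paper to Wang's work, so there is no genuine gap.
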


\begin{proof}
Observe first that the characteristic functions $\chi_{ij}$ in the statement are indeed the standard coordinates on $S_N\subset O_N$. These functions from a magic matrix, so we have a morphism as in the statement, given by $u_{ij}\to\chi_{ij}$. This morphism is surjective by Stone-Weierstrass, so the corresponding transpose map is an embedding $S_N\subset S_N^+$.

The fact that this embedding $S_N\subset S_N^+$ is compatible with the quantum group structures follows from definitions. In fact, this comes as well from Proposition 1.2 above, because $S_N$ does act on the set $X$ there. Finally, the assertion about the commutator ideal, which tells us that we are in a ``liberation'' situation, $(S_N^+)_{class}=S_N$, follows either directly, of from Proposition 1.2, because $S_N$ is the biggest classical group acting on $X$.

Regarding now the last assertion, this is clear at $N=2$, because the entries of a $2\times2$ magic unitary automatically commute. At $N=3$ the commutation check is more tricky, and the result was proved in \cite{wa2}. As for the assertion at $N\geq4$, this comes by using magic matrices of type $u=diag(v,w,1_{N-4})$, with $v,w$ being suitable $2\times2$ magic matrices. Indeed, we can choose $v,w$ such that the algebra generated by their coefficients is noncommutative, and infinite dimensional, and this gives the result. See \cite{wa2}.
\end{proof}

The main results regarding $S_N^+$ can be summarized as follows:

\begin{theorem}
The quantum permutation groups $S_N^+$ are as follows:
\begin{enumerate}
\item The Tannakian dual of $S_N^+$, formed by the spaces $C_{kl}=Hom(u^{\otimes k},u^{\otimes l})$, is the Temperley-Lieb algebra of index $N$, and this, at any $N\in\mathbb N$.

\item Equivalently, $C_{kl}$ appears as the span of the space $NC(k,l)$ of noncrossing partitions between an upper row of $k$ points, and a lower row of $l$ points.

\item The main character $\chi=\sum_iu_{ii}$ follows a Marchenko-Pastur (or free Poisson) law at any $N\geq4$, with respect to the Haar integration over $S_N^+$.

\item The fusion rules for the representations of $S_N^+$ coincide with the Clebsch-Gordan rules for the representations of $SO_3$, and this, at any $N\geq4$.

\item We have $S_4^+=SO_3^{-1}$, and the quantum subgroups $G\subset S_4^+$ are subject to an ADE classification, similar to the McKay classification of the subgroups of $SO_3$.

\item The quantum groups $S_N^+$ with $N\geq5$ are not coamenable, and the dimensions of their irreducible representations are bigger than those of $SO_3$.
\end{enumerate}
\end{theorem}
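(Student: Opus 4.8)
The plan is to assemble Theorem 1.4 from the known literature, since each of the six assertions is a citable result; the task is to organize them in a logically economical order, deriving the later analytic and structural statements from the Tannakian input. First I would prove (1): the space $C_{kl}=\mathrm{Hom}(u^{\otimes k},u^{\otimes l})$ for a magic unitary is computed by Woronowicz-type Tannaka--Krein reconstruction. The key observation is that the magic condition on $u$ is equivalent to saying that the multiplication map $m:\mathbb C^N\otimes\mathbb C^N\to\mathbb C^N$ and the unit $\eta:\mathbb C\to\mathbb C^N$ (the diagonal/counit of the algebra $C(X)$) are intertwiners for $u$; the category generated by $m,\eta$ under composition, tensor product and adjoints is exactly the linearization of $NC$, the category of noncrossing partitions, with loop parameter $N$. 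This is a diagrammatic computation identifying Temperley--Lieb of index $N$ with the planar algebra generated by a single box, and I would cite the original treatment (Banica, ``Symmetries of a generic coaction'') rather than redo it. Assertion (2) is then just the statement that $TL_N=\mathrm{span}(NC)$, i.e.\ the same fact phrased combinatorially.

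Next, (3) follows from (1)--(2) by a moment computation: the $2k$-th moment of $\chi=\sum_i u_{ii}$ with respect to the Haar state equals $\dim C_{0,k}=\dim\mathrm{Fix}(u^{\otimes k})=|NC(k)|=C_k$, the $k$-th Catalan number, once $N\geq 4$ so that the map $NC(k)\to\mathrm{Fix}(u^{\otimes k})$ is injective (for $N\leq 3$ there are linear relations among the diagrams). The Catalan numbers are exactly the moments of the Marchenko--Pastur law of parameter $1$, so $\chi$ has that law. For (4), the fusion rules: the Clebsch--Gordan rule $r_k\otimes r_l=r_{|k-l|}+r_{|k-l|+2}+\cdots+r_{k+l}$ for $SO_3$ is recovered by decomposing $u^{\otimes k}$ inductively, using that $\dim C_{kl}$ depends on $k,l$ through $|NC(k,l)|$ in the same way the $SO_3$ intertwiner dimensions do; equivalently, $TL_N$ for $N\geq 2$ has the same ``principal graph'' combinatorics as the representation category of $SO_3$. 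I would cite Banica for this.

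For (5), the isomorphism $S_4^+=SO_3^{-1}$ (a cocycle twist of $SO_3$, or equivalently $S_4^+=PO_2^+$-type identification) is established by matching the two as $C^*$-algebras with the same Tannakian category and then invoking the ADE/McKay classification of module categories over $TL_2$; here I would cite Banica--Bichon for the subgroup classification. Finally (6): non-coamenability for $N\geq 5$ follows from (3)--(4) via the standard criterion — the Kesten amenability condition would force the support of the law of $\chi+\chi^*$ (or the norm of the associated operator) to reach the maximal value, which happens only when $N\leq 4$; and the dimension comparison with $SO_3$ follows from the fusion rules together with the fact that $\dim r_k$ satisfies the Chebyshev-type recursion $\dim r_{k+1}=N\dim r_k-\dim r_{k-1}-\dim r_{k-2}$-style relation growing strictly faster than $2k+1$ once $N\geq 5$. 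The main obstacle, were one to do this from scratch rather than cite, is assertion (1): the diagrammatic identification of the Tannakian category with noncrossing partitions, which requires the full planar-algebra machinery; everything else is a formal consequence. Since the excerpt presents Theorem 1.4 as a summary of known results, the honest ``proof'' is a guided tour through \cite{wa2} and the follow-up papers, and I would write it as such.
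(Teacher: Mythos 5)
Your proposal is correct and follows essentially the same route as the paper: both treat the theorem as a summary of known results, deriving (2) from (1) via the Temperley--Lieb/noncrossing-partition correspondence, (3) from the Catalan-number count of fixed vectors, (4) from the same combinatorics, (5) from the known twisting/ADE results, and (6) from the Kesten criterion plus dimension growth. One minor slip: it is the $k$-th moment of $\chi$ (not the $2k$-th) that equals $\dim Fix(u^{\otimes k})=|NC(k)|=C_k$ for $N\geq4$ -- the ``$2k$'' belongs to the equivalent count $\#NC_2(2k)$ -- but this does not affect the argument, since the Marchenko--Pastur law is indeed the one with $k$-th moments $C_k$.
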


\begin{proof}
There are many results here, and for full explanations and references on this material, we refer to the survey paper \cite{bbc}. Let us just mention here that:

(1) This follows from the fact that the multiplication and unit of any algebra, and in particular of $\mathbb C^N$, can be modelled by $m=|\cup|$ and $u=\cap$, which generate $TL$.

(2) This is a useful version of (1), with the underlying isomorphism $NC_2(2k,2l)\simeq NC(k,l)$ being obtained by fattening/collapsing neighbors.

(3) This follows from (1) or (2), because the moments of $\chi$ count the number of diagrams in $NC_2(2k)\simeq NC(k)$, well-known to be the Catalan numbers $C_k=\frac{1}{k+1}\binom{2k}{k}$.

(4) This is a key result, which follows from (1) or (2), or directly from (3), the idea being that the Catalan numbers correspond to the Clebsch-Gordan rules.

(5) This is something technical, comes from the linear isomorphism $\mathbb C^4\simeq M_2(\mathbb C)$, the quantum symmetry groups of these latter algebras being $S_4^+,SO_3$.

(6) Here the dimension claim is clear, because $4$ gets replaced by $N\geq5$. As for the amenability claim, this follows from (3), and from the Kesten criterion.
\end{proof}

Generally speaking, we refer to the survey paper \cite{bbc} for further details regarding the above results, and for other known things about the quantum permutation groups. In what follows, we will come back to all this, later on, with some more details.

\section{Orbits and orbitals}

We are interested in what follows in the notions of orbits, orbitals and higher orbitals for the quantum subgroups $G\subset S_N^+$. Let us begin with the orbits. The study here goes back to Bichon's paper \cite{bi2}, which contains (implicitly) the definition of the orbits, plus a key result, namely the calculation of these orbits for the group duals $G=\widehat{\Gamma}\subset S_N^+$.

The systematic study of such orbits, and of the related notion of quantum transitivity, was started later on, in the recent paper \cite{bfr}. Let us begin with:

\begin{definition}
Let $G\subset S_N^+$ be a closed subgroup, with magic unitary $u=(u_{ij})$, and consider the equivalence relation on $\{1,\ldots,N\}$ given by $i\sim j\iff u_{ij}\neq0$.
\begin{enumerate}
\item The equivalence classes under $\sim$ are called orbits of $G$.

\item $G$ is called transitive when the action has a single orbit. 
\end{enumerate}
In other words, we call a subgroup $G\subset S_N^+$ transitive when $u_{ij}\neq0$, for any $i,j$.
\end{definition}

Here the fact that $\sim$ as defined above is indeed an equivalence relation follows by applying $\Delta,\varepsilon,S$ to a formula of type $u_{ij}\neq0$. For details, see \cite{bfr}.

Generally speaking, the theory from the classical case extends well to the quantum setting, and we have in particular the following result:

\begin{proposition}
For a subgroup $G\subset S_N^+$, the following are equivalent:
\begin{enumerate}
\item $G$ is transitive.

\item $Fix(u)=\mathbb C\xi$, where $\xi$ is the all-one vector.

\item $\int_Gu_{ij}=\frac{1}{N}$, for any $i,j$.
\end{enumerate}
\end{proposition}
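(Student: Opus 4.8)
The plan is to establish the chain of implications $(1)\Leftrightarrow(2)$ and $(2)\Leftrightarrow(3)$, using the basic properties of the Haar integration on compact quantum groups, in particular the fact that $\int_G$ applied to a corepresentation gives the orthogonal projection onto its fixed space. Recall that for the magic unitary $u$ of $G\subset S_N^+$, the matrix $P=(\int_Gu_{ij})_{ij}$ is precisely the matrix of the projection $\mathbb C^N\to Fix(u)$ in the standard basis. This single observation will essentially drive everything.

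For $(1)\Leftrightarrow(2)$: first note that the all-one vector $\xi$ always satisfies $u\xi=\xi$, since the rows of $u$ sum to $1$; hence $\mathbb C\xi\subset Fix(u)$ always. Now I would argue that $Fix(u)$ is spanned by the indicator vectors of the orbits. Indeed, if $v\in Fix(u)$, i.e. $\sum_j u_{ij}v_j = v_i$ for all $i$, then applying the counit gives nothing new, but multiplying the relation $\sum_j u_{ij}v_j=v_i$ on the left by $u_{ik}$ and using that the $u_{ij}$ are orthogonal projections summing to $1$ on each row, one gets $u_{ik}v_k = u_{ik}v_i$, so $u_{ik}\neq 0$ forces $v_k=v_i$; that is, $v$ is constant on each orbit. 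Conversely the indicator vector of an orbit is fixed by $u$ — this follows because $u_{ij}\neq 0$ implies $i\sim j$, so for $i$ in a given orbit $O$ one has $\sum_{j\in O}u_{ij}=\sum_j u_{ij}=1$. Hence $\dim Fix(u)$ equals the number of orbits, and this is $1$ exactly when $G$ is transitive, giving $(1)\Leftrightarrow(2)$.

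For $(2)\Leftrightarrow(3)$: since $P=(\int_Gu_{ij})$ is the projection onto $Fix(u)$, condition $(2)$ says $P$ is the rank-one projection onto $\mathbb C\xi$, which in the standard basis is the matrix with all entries equal to $\frac{1}{N}$; this is exactly condition $(3)$. Conversely, if $\int_Gu_{ij}=\frac1N$ for all $i,j$, then $P=\frac1N J$ where $J$ is the all-one matrix, which has rank $1$ and range $\mathbb C\xi$, so $Fix(u)=\mathbb C\xi$. One should also remark, to make $(3)\Rightarrow(1)$ transparent on its own, that $\int_Gu_{ij}>0$ forces $u_{ij}\neq 0$, so $(3)$ directly gives transitivity; but routing everything through $(2)$ is cleanest.

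The main obstacle, and the only point requiring genuine care, is the algebraic identity $u_{ik}v_k=u_{ik}v_i$ used in the $(1)\Rightarrow(2)$ direction: one must use both that the $u_{ij}$ are \emph{projections} (not merely that they sum to $1$) and the pairwise orthogonality $u_{ik}u_{il}=\delta_{kl}u_{ik}$ within a row. Concretely, from $\sum_j u_{ij}v_j=v_i$, left-multiplying by $u_{ik}$ yields $u_{ik}v_k = v_i u_{ik}$, and since scalars are central this reads $u_{ik}v_k = u_{ik}v_i$, whence $(v_k-v_i)u_{ik}=0$; this is where one concludes $v_k=v_i$ whenever $u_{ik}\neq 0$. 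Everything else is a routine translation between the language of fixed vectors, Haar projections, and the combinatorics of the equivalence relation $\sim$, and the analogue in the classical case is standard, so no further subtlety is expected.
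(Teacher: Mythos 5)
Your proof is correct and follows essentially the same route as the paper, which simply cites \cite{bi2} for $(1)\Rightarrow(2)$ (the description of $Fix(u)$ via orbit indicator vectors, which you prove directly using row-orthogonality of the magic projections), \cite{wo1} for $(2)\Rightarrow(3)$ (the Haar integrals forming the projection onto $Fix(u)$), and notes $(3)\Rightarrow(1)$ is trivial; you supply the standard details rather than references. The only cosmetic omission is that when checking that an orbit indicator $1_O$ is fixed you treat only indices $i\in O$; the case $i\notin O$ follows by the same observation that $u_{ij}\neq0$ forces $i\sim j$.
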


\begin{proof}
Here $(1)\implies(2)$ follows from \cite{bi2}, $(2)\implies(3)$ follows by using the general theory in \cite{wo1}, and $(3)\implies(1)$ is trivial. For details here, we refer to \cite{bch}.
\end{proof}

Let us discuss now the notion of orbital. The definition here, which goes back to the recent papers \cite{lmr}, \cite{mrv}, is quite similar to Definition 2.1 above, as follows:

\begin{definition}
Let $G\subset S_N^+$ be a closed subgroup, with magic unitary $u=(u_{ij})$, and consider the equivalence relation on $\{1,\ldots,N\}^2$ given by $(i,k)\sim (j,l)\iff u_{ij}u_{kl}\neq0$.
\begin{enumerate}
\item The equivalence classes under $\sim$ are called orbitals of $G$.

\item $G$ is called doubly transitive when the action has two orbitals. 
\end{enumerate}
In other words, we call $G\subset S_N^+$ doubly transitive when $u_{ij}u_{kl}\neq0$, for any $i\neq k,j\neq l$.
\end{definition}

Onca again, the fact that we have indeed an equivalence relation comes from a straightforward computation, performed in \cite{lmr}. It is clear from definitions that the diagonal $D\subset\{1,\ldots,N\}^2$ is an orbital, and that its complement $D^c$ must be a union of orbitals. With this remark in hand, the meaning of (2) is that the orbitals must be $D,D^c$.

Among the other basic results established in \cite{lmr} is the fact, analogous to the above-mentioned result from \cite{bi1} regarding the orbits, that, with suitable definitions, the space $Fix(u^{\otimes 2})$ consists of the functions which are constant on the orbitals.

In analogy with Proposition 2.2 above, we have:

\begin{theorem}
For a doubly transitive subgroup $G\subset S_N^+$, we have:
$$\int_Gu_{ij}u_{kl}=\begin{cases}
\frac{1}{N}&{\rm if}\ i=k,j=l\\
0&{\rm if}\ i=k,j\neq l\ {\rm or}\ i\neq k,j=l\\
\frac{1}{N(N-1)}&{\rm if}\ i\neq k,j\neq l
\end{cases}$$
Moreover, this formula characterizes the double transitivity.
\end{theorem}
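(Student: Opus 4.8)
The plan is to run the same argument as for Proposition 2.2, but now inside $\mathbb C^N\otimes\mathbb C^N$. The basic input is the Peter-Weyl theory of Woronowicz \cite{wo1}: for any finite-dimensional corepresentation $v$ of $C(G)$ on a Hilbert space $H$, the element $(\mathrm{id}\otimes\int_G)v\in B(H)$ is the orthogonal projection onto $\mathrm{Fix}(v)$. Applied to $v=u^{\otimes 2}$, whose entries are $(u^{\otimes 2})_{(ik),(jl)}=u_{ij}u_{kl}$, this says precisely that the matrix $P\in M_{N^2}(\mathbb C)$ with entries $P_{(ik),(jl)}=\int_Gu_{ij}u_{kl}$ is the orthogonal projection onto $\mathrm{Fix}(u^{\otimes 2})\subset\mathbb C^N\otimes\mathbb C^N$. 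Together with the result of \cite{lmr} recalled above, that $\mathrm{Fix}(u^{\otimes 2})$ consists of the vectors constant on the orbitals of $G$, this reduces the whole statement to identifying this fixed space and computing one orthogonal projection.

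For the direct implication, suppose $G$ is doubly transitive, so that the orbitals are exactly the diagonal $D\subset\{1,\ldots,N\}^2$ and its complement $D^c$. Then $\mathrm{Fix}(u^{\otimes 2})$ is spanned by the two indicator vectors $\xi_D=\sum_ie_i\otimes e_i$ and $\xi_{D^c}=\sum_{i\neq k}e_i\otimes e_k$, which are orthogonal, with $||\xi_D||^2=N$ and $||\xi_{D^c}||^2=N(N-1)$. Hence the projection onto this plane is
$$P=\frac{1}{N}|\xi_D\rangle\langle\xi_D|+\frac{1}{N(N-1)}|\xi_{D^c}\rangle\langle\xi_{D^c}|,$$
so that $P_{(ik),(jl)}=\frac{1}{N}\delta_{ik}\delta_{jl}+\frac{1}{N(N-1)}(1-\delta_{ik})(1-\delta_{jl})$. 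Separating the four cases according to whether $i=k$ or not and $j=l$ or not now yields exactly the asserted formula; this last step is a routine check.

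For the converse, assume the displayed formula holds. The matrix $P$ is still, unconditionally, the orthogonal projection onto $\mathrm{Fix}(u^{\otimes 2})$, so its rank equals its trace, which from the formula is $N\cdot\frac{1}{N}+N(N-1)\cdot\frac{1}{N(N-1)}=2$. Thus $\dim\mathrm{Fix}(u^{\otimes 2})=2$, meaning that $G$ has exactly two orbitals, which is by Definition 2.3 the double transitivity. I do not expect a real obstacle here: the only ingredient with genuine content is the identification of $\mathrm{Fix}(u^{\otimes 2})$ with the orbital-constant vectors from \cite{lmr}, and granted that, the rest is bookkeeping. If one prefers to avoid quoting it, one can instead pin down the few unknown entries of $P$ by hand, as in Proposition 2.2, using the row and column sum relations $\sum_ju_{ij}=\sum_iu_{ij}=1$ and the bi-invariance of $\int_G$; this is more laborious but elementary.
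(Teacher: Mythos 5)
Your proof is correct and takes essentially the same route as the paper: both identify the matrix of integrals $\int_Gu_{ij}u_{kl}$ with the orthogonal projection onto $Fix(u^{\otimes 2})$ via \cite{wo1}, and use the orbital description of this fixed space from \cite{lmr} to see that double transitivity makes it the two-dimensional span of $\xi_D,\xi_{D^c}$, so that the projection (equivalently, the $S_N$ values) gives the stated formula. Your explicit rank-one decomposition of the projection and the trace argument for the converse merely spell out steps the paper leaves implicit (``coincides with $S_N$'', ``the converse is clear'').
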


\begin{proof}
We use the standard fact, from \cite{wo1}, that the integrals in the statement form the projection onto $Fix(u^{\otimes 2})$. Now if we assume that $G$ is doubly transitive, $Fix(u^{\otimes 2})$ has dimension 2, and therefore coincides with $Fix(u^{\otimes 2})$ for the usual symmetric group $S_N$. Thus the integrals in the statement coincide with those for the symmetric group $S_N$, which are given by the above formula. Finally, the converse is clear as well.
\end{proof}

Regarding now the examples, the available constructions of transitive quantum groups were surveyed in \cite{bch}. As a first class of examples, we have the quantum automorphism groups of various finite graphs \cite{bbc}, \cite{bi1}, \cite{cha}, \cite{swe}, whose fine transitivity properties were recently studied in \cite{lmr}, \cite{mrv}. We have as well a second class of examples, coming from various matrix model constructions, that we will discuss later on.

\section{Permutation groups}

In this section and in the next one we discuss the notion of $k$-transitivity, at $k\in\mathbb N$. We begin our study by recalling a few standard facts regarding the symmetric group $S_N$, and its subgroups $G\subset S_N$, from a representation theory/probabilistic viewpoint. Generally speaking, we refer to \cite{bco} for symmetric group material of this type. In the next section, where we will deal with quantum groups, our reference will be \cite{bco} as well.

We first have the following standard result:

\begin{proposition}
Consider the symmetric group $S_N$, together with its standard matrix coordinates $u_{ij}=\chi(\sigma\in S_N|\sigma(j)=i)$. We have the formula
$$\int_{S_N}u_{i_1j_1}\ldots u_{i_kj_k}=\begin{cases}
\frac{(N-|\ker i|)!}{N!}&{\rm if}\ \ker i=\ker j\\
0&{\rm otherwise}
\end{cases}$$
where $\ker i$ denotes as usual the partition of $\{1,\ldots,k\}$ whose blocks collect the equal indices of $i$, and where $|.|$ denotes the number of blocks.
\end{proposition}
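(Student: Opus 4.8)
The plan is to exploit the fact that $S_N$ is a finite group, so that integration against its Haar measure is simply averaging: $\int_{S_N}f=\frac{1}{N!}\sum_{\sigma\in S_N}f(\sigma)$. First I would unwind the definition of the coordinates. Since $u_{ij}(\sigma)=\chi(\sigma(j)=i)$ takes only the values $0$ and $1$, the product $u_{i_1j_1}\cdots u_{i_kj_k}$ is exactly the indicator function of the set $E=\{\sigma\in S_N:\sigma(j_a)=i_a\ \text{for all}\ a=1,\ldots,k\}$. Hence $\int_{S_N}u_{i_1j_1}\cdots u_{i_kj_k}=\frac{1}{N!}|E|$, and the whole statement reduces to a counting problem for $|E|$.

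Next I would analyze when the constraint system defining $E$ is consistent. The conditions $\sigma(j_a)=i_a$ describe a partial function; for it to extend to an actual permutation, it must be well-defined, i.e. $j_a=j_b\Rightarrow i_a=i_b$, and injective, i.e. $i_a=i_b\Rightarrow j_a=j_b$. The injectivity implication is automatic as soon as $\sigma$ is a bijection with $\sigma(j_a)=i_a$ and $\sigma(j_b)=i_b$. The two conditions taken together say precisely that $\ker i=\ker j$. Therefore, if $\ker i\neq\ker j$ we get $E=\varnothing$ and the integral vanishes, which is the second branch of the formula.

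In the remaining case $\ker i=\ker j$, the constraints pin down $\sigma$ on the set $J$ of distinct values among $j_1,\ldots,j_k$, which has $|\ker j|=|\ker i|$ elements, sending it bijectively onto the set $I$ of distinct values among $i_1,\ldots,i_k$. Every permutation in $E$ is then obtained by choosing an arbitrary bijection from $\{1,\ldots,N\}\setminus J$ onto $\{1,\ldots,N\}\setminus I$, and there are $(N-|\ker i|)!$ of these. Dividing by $N!$ yields the claimed value $\frac{(N-|\ker i|)!}{N!}$.

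I do not expect any real obstacle here; the one spot that deserves a moment of care is the equivalence ``$E\neq\varnothing\iff\ker i=\ker j$'', and in particular the observation that the injectivity half of the consistency condition is free, coming from $\sigma$ being a bijection rather than needing to be imposed. One could alternatively package the same content representation-theoretically, identifying the integral with the orthogonal projection onto $Fix(u^{\otimes k})$ and using that this fixed space is spanned by vectors indexed by the partitions of $\{1,\ldots,k\}$, but the direct counting argument above is shorter and entirely self-contained.
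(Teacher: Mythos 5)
Your proof is correct and follows essentially the same route as the paper: rewrite the integral as $\frac{1}{N!}$ times the number of $\sigma\in S_N$ with $\sigma(j_a)=i_a$ for all $a$, observe that such $\sigma$ exist precisely when $\ker i=\ker j$, and count the $(N-|\ker i|)!$ extensions in that case. Your extra care about why the injectivity half of the consistency condition comes for free is a nice touch, but the argument is the same.
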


\begin{proof}
According to the definition of $u_{ij}$, the integrals in the statement are given by:
$$\int_{S_N}u_{i_1j_1}\ldots u_{i_kj_k}=\frac{1}{N!}\#\left\{\sigma\in S_N\Big|\sigma(j_1)=i_1,\ldots,\sigma(j_k)=i_k\right\}$$

Since the existence of $\sigma\in S_N$ as above requires $i_m=i_n\iff j_m=j_n$, this integral vanishes when $\ker i\neq\ker j$. As for the case $\ker i=\ker j$, if we denote by $b\in\{1,\ldots,k\}$ the number of blocks of this partition, we have $N-b$ points to be sent bijectively to $N-b$ points, and so $(N-b)!$ solutions, and the integral is $\frac{(N-b)!}{N!}$, as claimed.
\end{proof}

We recall now that each action $G\curvearrowright\{1,\ldots,N\}$ produces an action $G\curvearrowright\{1,\ldots,N\}^k$ for any $k\in\mathbb N$, and by restriction, $G$ acts on the following set:
$$I_N^k=\left\{(i_1,\ldots,i_k)\in\{1,\ldots,N\}^k\Big|i_1,\ldots,i_k\ {\rm distinct}\right\}$$

We have the following well-known result:

\begin{theorem}
Given a subgroup $G\subset S_N$, with standard matrix coordinates denoted $u_{ij}=\chi(\sigma|\sigma(j)=i)$, and a number $k\leq N$, the following are equivalent:
\begin{enumerate}
\item $G$ is $k$-transitive, in the sense that $G\curvearrowright I_N^k$ is transitive.

\item $Fix(u^{\otimes k})$ is minimal, i.e. is the same as for $G=S_N$.

\item $\dim Fix(u^{\otimes k})=B_k$, where $B_k$ is the $k$-th Bell number.

\item $\int_Gu_{i_1j_1}\ldots u_{i_kj_k}=\frac{(N-k)!}{N!}$, for any $i,j\in I_N^k$.

\item $\int_Gu_{i_1j_1}\ldots u_{i_kj_k}\neq0$, for any $i,j\in I_N^k$.

\item $u_{i_1j_1}\ldots u_{i_kj_k}\neq0$, for any $i,j\in I_N^k$.
\end{enumerate}
\end{theorem}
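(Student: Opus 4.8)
The plan is to establish a cyclic chain of implications, using Proposition 3.1 (the Weingarten-type formula for $S_N$) as the computational backbone, together with the general representation-theoretic fact from \cite{wo1} that the integrals $\int_G u_{i_1j_1}\ldots u_{i_kj_k}$ are the matrix entries of the orthogonal projection onto $Fix(u^{\otimes k})$. First I would recall the combinatorial description of $Fix(u^{\otimes k})$ for a permutation group $G\subset S_N$: it is spanned by the indicator vectors $\xi_\pi$ of the orbits of $G\curvearrowright\{1,\dots,N\}^k$, and these orbits always refine into the ``partition strata'' indexed by $\ker i$; hence the minimal possible fixed space — attained by $G=S_N$ — has as basis the vectors $\xi_\pi$ for $\pi$ ranging over all set partitions of $\{1,\dots,k\}$, giving $\dim = B_k$. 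This gives $(2)\Leftrightarrow(3)$ immediately, and the inclusion-of-orbits picture shows $Fix(u^{\otimes k})$ for $G$ always \emph{contains} the $S_N$-fixed space, so ``minimal'' means ``equal to the $S_N$ one''.

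Next, $(1)\Leftrightarrow(2)$: $k$-transitivity says $G$ acts transitively on $I_N^k$, i.e.\ the stratum $\{\ker i = \hat 1_k\}$ (all indices distinct, the discrete partition into singletons) is a single $G$-orbit; combined with the fact that the strata for coarser $\ker i$ are automatically sums of $S_N$-orbit indicators already present, transitivity on $I_N^k$ forces all orbit-indicator vectors to coincide with the partition-indexed ones, hence $Fix(u^{\otimes k})$ is minimal. Conversely minimality forces a single orbit on each stratum, in particular on $I_N^k$. For $(2)\Rightarrow(4)$: if $Fix(u^{\otimes k})$ equals the $S_N$-fixed space, then the projection onto it is literally the same operator as for $S_N$, so the integrals coincide with those computed in Proposition 3.1, which for $i,j\in I_N^k$ (so $\ker i=\ker j=\hat 1_k$, $|\ker i|=k$) gives exactly $\frac{(N-k)!}{N!}$. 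Then $(4)\Rightarrow(5)$ is trivial since $\frac{(N-k)!}{N!}\neq 0$, and $(5)\Rightarrow(6)$ is trivial since a vanishing product has vanishing integral (the integral of a positive element — here $u_{i_1j_1}\cdots u_{i_kj_k}$ times its adjoint, or one uses positivity of the Haar state and that this element is a product of projections whose integral is $\geq 0$).

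The step I expect to need the most care is $(6)\Rightarrow(1)$ (equivalently closing the loop back to transitivity), since here we must pass from a nonvanishing \emph{product of coordinate functions} back to a statement about orbits, i.e.\ reconstruct the $G$-action combinatorially. The argument: $u_{i_1j_1}\cdots u_{i_kj_k}\neq 0$ for $i,j\in I_N^k$ means (unwinding the definition of the coordinates for a genuine subgroup $G\subset S_N$) that the set $\{\sigma\in G \mid \sigma(j_m)=i_m \ \forall m\}$ is nonempty, i.e.\ $j$ and $i$ lie in the same $G$-orbit on $I_N^k$; quantifying over all such $i,j$ gives a single orbit, which is $(1)$. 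One should remark that the equivalences $(1)$–$(6)$ are for classical $G$, so all six can in fact be read off directly from Proposition 3.1 once the orbit description of $Fix(u^{\otimes k})$ is in hand; the only genuine content beyond bookkeeping is the identification of the minimal fixed space with the $B_k$-dimensional partition span and the verification that the $S_N$-integrals are the claimed $\frac{(N-k)!}{N!}$, both of which are supplied by Proposition 3.1 and the standard theory. I would close by noting this theorem is the blueprint for the quantum generalization in the next section, where $(4)$–$(6)$ remain the meaningful conditions.
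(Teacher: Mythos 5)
Your proposal is correct and follows essentially the same route as the paper: the orbit/fixed-space description giving the Bell number count, the identification of the integrals with the entries of the projection onto $Fix(u^{\otimes k})$ so that minimality yields the $S_N$ values from Proposition 3.1, the trivial implications, and closing the loop by unwinding $u_{i_1j_1}\cdots u_{i_kj_k}$ as a characteristic function to recover an element of $G$ mapping $j$ to $i$. The only differences are organizational (you prove $(1)\Leftrightarrow(2)\Leftrightarrow(3)$ directly and spell out the orbit-indicator basis, where the paper runs a single cycle and quotes the Bell number fact), which does not change the substance.
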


\begin{proof}
All this is well-known, the idea being as follows:

$(1)\implies(2)$ This follows from the fact that $u^{\otimes k}$ comes by summing certain actions $G\curvearrowright I_N^r$ with $r\leq k$, and the transitivity at $k$ implies the transitivity at any $r\leq k$.

$(2)\implies(3)$ This comes from the well-known fact that for the symmetric group $S_N$, the multiplicity $\#(1\in u^{\otimes k})$ equals the $k$-th Bell number $B_k$, for any $k\leq N$.

$(3)\implies(4)$ We can use the fact that $P_{i_1\ldots i_k,j_1\ldots j_k}=\int_Gu_{i_1j_1}\ldots u_{i_kj_k}$ is the orthogonal projection onto $Fix(u^{\otimes k})$. Thus we can assume $G=S_N$, and here we have:
$$\int_{S_N}u_{i_1j_1}\ldots u_{i_kj_k}
=\int_{S_N}\chi\left(\sigma\Big|\sigma(j_1)=i_1,\ldots,\sigma(j_k)=i_k\right)
=\frac{(N-k)!}{N!}$$

$(4)\implies(5)$ This is trivial.

$(5)\implies(6)$ This is trivial too.

$(6)\implies(1)$ This is clear, because if $u_{i_1j_1}\ldots u_{i_kj_k}=\chi(\sigma|\sigma(j_1)=i_1,\ldots,\sigma(j_k)=i_k)$ is nonzero, we can find an element $\sigma\in G$ such that $\sigma(j_s)=i_s$, $\forall s$.
\end{proof}

Summarizing, we have now a complete picture of the notion of $k$-transitivity for the usual permutation groups $G\subset S_N$, from a probabilistic viewpoint.

\section{Quantum transitivity}

In this section we discuss the quantum permutation group analogue of the above results. Once again, we generally refer to \cite{bco} for the needed preliminary material. We will need as well the general theory from \cite{bi2}, \cite{lmr} which solves the problems at $k=1,2$.

Let us begin with an analogue of Proposition 3.1. The formula here, from \cite{bco}, is:

\begin{proposition}
Consider the quantum permutation group $S_N^+$, with standard coordinates denoted $u_{ij}$. We have then the Weingarten formula:
$$\int_{S_N^+}u_{i_1j_1}\ldots u_{i_kj_k}=\sum_{\pi,\sigma\in NC_k}\delta_\pi(i)\delta_\sigma(j)W_{S_N^+}(\pi,\sigma)$$
In particular, at $k\leq3$, we obtain the same integrals as for $S_N$.
\end{proposition}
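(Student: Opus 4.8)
The plan is to establish the Weingarten formula in the standard way, and then to analyze the small-$k$ cases by comparing the combinatorial data for $S_N^+$ with that for $S_N$. First I would recall the general Weingarten machinery for compact quantum groups: the integral $\int_{S_N^+}u_{i_1j_1}\ldots u_{i_kj_k}$ is the $(i,j)$ entry of the orthogonal projection onto $Fix(u^{\otimes k})$, and by Theorem 1.4 this fixed-point space is spanned by the vectors $\xi_\pi$ indexed by noncrossing partitions $\pi\in NC(k)$, with $\xi_\pi=\sum_{\ker i\geq\pi}e_{i_1}\otimes\cdots\otimes e_{i_k}$. Writing the Gram matrix $G_{kN}(\pi,\sigma)=\langle\xi_\pi,\xi_\sigma\rangle=N^{|\pi\vee\sigma|}$ and setting $W_{S_N^+}=G_{kN}^{-1}$ (invertible for $N\geq k$, and with the appropriate interpretation otherwise), expanding the projection in this spanning set yields exactly the stated formula, with $\delta_\pi(i)=1$ iff $\ker i\geq\pi$.

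For the second assertion, I would observe that the set $NC(k)$ of noncrossing partitions coincides with the set $P(k)$ of all partitions precisely when $k\leq3$, since the smallest crossing partition lives on $4$ points. Hence for $k\leq3$ the index set in the Weingarten sum for $S_N^+$ is identical to the one appearing in the corresponding sum for $S_N$ (where partitions range over all of $P(k)$, cf. the reformulation of Proposition 3.1 via the classical Weingarten formula). Moreover the Gram matrix entry $N^{|\pi\vee\sigma|}$ is the same function of $\pi,\sigma$ in both cases, so the Weingarten matrices $W_{S_N^+}$ and $W_{S_N}$ coincide on $NC(k)=P(k)$; therefore the two Weingarten sums agree term by term, and the integrals over $S_N^+$ equal those over $S_N$ for all $k\leq3$. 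This matches the known facts that $S_2^+=S_2$ and $S_3^+=S_3$ from Proposition 1.3, and extends the bookkeeping to the mixed moments of length $3$.

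The main obstacle I anticipate is purely one of careful bookkeeping rather than genuine difficulty: one must match the two combinatorial conventions exactly — confirming that the classical Weingarten description of $\int_{S_N}u_{i_1j_1}\ldots u_{i_kj_k}$ from Proposition 3.1 is genuinely the $P(k)$-indexed sum with the same Gram/Weingarten pair — and one must handle the degenerate range $N<k$ where $G_{kN}$ is singular and $W_{S_N^+}$ must be replaced by a generalized inverse, checking that the resulting entry formula is still valid. For the statement as given, restricting attention to $N\geq k$ (so in particular $N\geq3$ in the regime of interest) sidesteps the singular case, and the identification of $NC(k)=P(k)$ for $k\leq3$ then closes the argument immediately.
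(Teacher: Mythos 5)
Your proposal is correct and follows essentially the same route as the paper: the Weingarten formula is obtained from the fact that the integrals form the projection onto $Fix(u^{\otimes k})$, spanned by the noncrossing partition vectors with Gram matrix $N^{|\pi\vee\sigma|}$ (the paper simply cites \cite{bco} for this), and the $k\leq3$ assertion is deduced by comparing with the classical Weingarten formula for $S_N$ over $P_k$ and using $P_k=NC_k$ with identical Gram, hence Weingarten, matrices. Your extra remark about the singular regime $N<k$ is a reasonable precaution but is not needed for, nor addressed in, the paper's argument.
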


\begin{proof}
The formula in the statement is from \cite{bco}, with $NC_k$ being the set of noncrossing partitions of $\{1,\ldots,k\}$, with the Weingarten matrix being given by $W_{S_N^+}=G_{S_N^+}^{-1}$, where $G_{S_N^+}(\pi,\sigma)=N^{|\pi\vee\sigma|}$, and with the formula itself basically coming from the fact that the matrix formed by the integrals in the statement is the projection onto $Fix(u^{\otimes k})$.

Regarding now the second assertion, let us recall as well from \cite{bco} that it is possible to write a Weingarten formula for the usual symmetric group $S_N$ as well, as follows:
$$\int_{S_N}u_{i_1j_1}\ldots u_{i_kj_k}=\sum_{\pi,\sigma\in P_k}\delta_\pi(i)\delta_\sigma(j)W_{S_N}(\pi,\sigma)$$

Here $P_k$ is the set of all partitions of $\{1,\ldots,k\}$, and the Weingarten matrix is given by $W_{S_N}=G_{S_N}^{-1}$, with $G_{S_N}(\pi,\sigma)=N^{|\pi\vee\sigma|}$. This formula is of course not very interesting, because the integrals on the right are already known, from Proposition 3.1 above.

However, in our context, we can use this formula, by comparing it with the one in the statement. Since at $k\leq 3$ all the partitions of $\{1,\ldots,k\}$ are noncrossing, we have $P_k=NC_k$, the Weingarten functions coincide as well, and we obtain the result.
\end{proof}

Regarding now the notion of $k$-transitivity, there are some changes here as well. Each magic unitary matrix $u=(u_{ij})$ produces a corepresentation $u^{\otimes k}=(u_{i_1j_1}\ldots u_{i_kj_k})$, and so a coaction map $\Phi:(\mathbb C^N)^{\otimes k}\to C(G)\otimes(\mathbb C^N)^{\otimes k}$, given by the following formula:
$$\Phi(e_{i_1\ldots i_k})=\sum_{j_1\ldots j_k}u_{i_1j_1}\ldots u_{i_kj_k}\otimes e_{j_1\ldots j_k}$$

The problem is that $span(I_N^k)$ is no longer invariant, due to the fact that the variables $u_{ij}$ no longer commute. We can only say that $span(J_N^k)$ is invariant, where:
$$J_N^k=\left\{(i_1,\ldots,i_k)\in\{1,\ldots,N\}^k\Big|i_1\neq i_2\neq\ldots\neq i_k\right\}$$

Indeed, by using the fact, coming from the magic condition on $u$, that  $a\neq c,b=d$ implies $u_{ab}u_{cd}=0$, we obtain that for $i\in J_N^k$ we have, as desired:
$$\Phi(e_{i_1\ldots i_k})=\sum_{j_1\neq j_2\neq\ldots\neq j_k}u_{i_1j_1}\ldots u_{i_kj_k}\otimes e_{j_1\ldots j_k}$$

We can study the transitivity properties of this coaction, as follows:

\begin{proposition}
Given a closed subgroup $G\subset S_N^+$, consider the associated coaction $\Phi:span(J_N^k)\to C(G)\otimes span(J_N^k)$. The following conditions are then equivalent:
\begin{enumerate}
\item $Fix(\Phi)=\{\xi|\Phi(\xi)=1\otimes\xi\}$ is $1$-dimensional.

\item $Fix(\Phi)=\mathbb C\eta$, where $\eta=\sum_{i\in J_N^k}e_{i_1\ldots i_k}$.

\item $\sum_{i\in J_N^k}\int_Gu_{i_1i_1}\ldots u_{i_ki_k}=1$.
\end{enumerate}
If these conditions are satisfied, we say that the coaction $\Phi$ is transitive.
\end{proposition}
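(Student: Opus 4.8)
The plan is to reduce the whole statement to the standard fact that the Haar average of a coaction is the orthogonal projection onto its fixed point space, once we have checked that the distinguished vector $\eta$ is automatically fixed.

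First I would verify that $\Phi(\eta)=1\otimes\eta$ unconditionally. Starting from the formula $\Phi(e_{i_1\ldots i_k})=\sum_{j\in J_N^k}u_{i_1j_1}\ldots u_{i_kj_k}\otimes e_{j_1\ldots j_k}$ recalled just before the statement, and summing over $i\in J_N^k$, one gets $\Phi(\eta)=\sum_{j\in J_N^k}\big(\sum_{i\in J_N^k}u_{i_1j_1}\ldots u_{i_kj_k}\big)\otimes e_{j_1\ldots j_k}$. For each fixed $j\in J_N^k$ the magic relations force $u_{i_1j_1}\ldots u_{i_kj_k}=0$ whenever $i\notin J_N^k$ (if $i_m=i_{m+1}$, the product contains the vanishing factor $u_{i_mj_m}u_{i_mj_{m+1}}$, two distinct projections on the same row), so the inner sum equals the full iterated column sum $\sum_{i_1,\ldots,i_k}u_{i_1j_1}\ldots u_{i_kj_k}=1$, and hence $\Phi(\eta)=1\otimes\eta$. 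As $\eta\neq0$, this shows $\mathbb C\eta\subseteq Fix(\Phi)$ always, so $Fix(\Phi)$ is $1$-dimensional exactly when it equals $\mathbb C\eta$; this is the equivalence $(1)\Leftrightarrow(2)$.

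For $(1)\Leftrightarrow(3)$ I would invoke the Woronowicz theory already used in Proposition 2.2: the map $P=(\int_G\otimes\mathrm{id})\Phi$ is the orthogonal projection of $span(J_N^k)$ onto $Fix(\Phi)$. On the standard orthonormal basis $\{e_{i_1\ldots i_k}\}_{i\in J_N^k}$ one computes $Pe_{i_1\ldots i_k}=\sum_{j\in J_N^k}\big(\int_Gu_{i_1j_1}\ldots u_{i_kj_k}\big)e_{j_1\ldots j_k}$, so the diagonal entries of $P$ in this basis are the numbers $\int_Gu_{i_1i_1}\ldots u_{i_ki_k}$. Since $P$ is a projection, its trace equals the dimension of its range, i.e. $\dim Fix(\Phi)=\sum_{i\in J_N^k}\int_Gu_{i_1i_1}\ldots u_{i_ki_k}$. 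This dimension is a positive integer --- positive because $\eta\in Fix(\Phi)$ by the previous step --- so it equals $1$ if and only if $Fix(\Phi)$ is $1$-dimensional, which gives $(1)\Leftrightarrow(3)$ and, together with the previous paragraph, completes the proof.

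I do not expect a genuine obstacle: the argument is a direct transcription of the $k=1$ case behind Proposition 2.2, and the only step requiring a little care is the identity $\sum_{i\in J_N^k}u_{i_1j_1}\ldots u_{i_kj_k}=1$ for $j\in J_N^k$, i.e. the fact that restricting to the alternating index set $J_N^k$ does not spoil the column-sum relations --- which is precisely the magic-unitary computation recorded before the statement.
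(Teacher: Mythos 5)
Your proof is correct and takes essentially the same approach as the paper: you establish $(1)\Leftrightarrow(2)$ by checking $\Phi(\eta)=1\otimes\eta$ through the same magic-unitary computation (you extend the sum over $J_N^k$ to all indices and factorize the column sums, where the paper telescopes the alternating sum---the same relations either way), and $(1)\Leftrightarrow(3)$ from the standard fact that the Haar average of the coaction is the orthogonal projection onto $Fix(\Phi)$, i.e. $\dim Fix(\Phi)=\int_G\chi$. No gaps.
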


\begin{proof}
In order to prove $(1)\iff(2)$, we just have to check that we have indeed $\Phi(\eta)=1\otimes\eta$, with $\eta$ being as in the statement. By definition of $\Phi$, we have:
$$\Phi(\eta)=\sum_{j_1\neq j_2\neq\ldots\neq j_k}\sum_{i_1\neq i_2\neq\ldots\neq i_k}u_{i_1j_1}\ldots u_{i_kj_k}\otimes e_{j_1\ldots j_k}$$

Let us compute the middle sum $S$. When summing over indices $i_1\neq i_2$ we obtain $(1-u_{i_2j_1})u_{i_2j_2}\ldots u_{i_kj_k}=u_{i_2j_2}\ldots u_{i_kj_k}$, then when summing over indices $i_2\neq i_3$ we obtain $(1-u_{i_3j_2})u_{i_3j_3}\ldots u_{i_kj_k}=u_{i_3j_3}\ldots u_{i_kj_k}$, and so on, up to obtaining $\sum_{j_k}u_{j_kj_k}=1$ at the end. Thus we have $S=1$, and so the condition $\Phi(\eta)=1\otimes\eta$ is satisfied indeed.

Regarding now $(1)\iff(3)$, this comes from the general formula $\dim Fix(\Phi)=\int_G\chi$, where $\chi$ is the character of the corepresentation associated to $\Phi$. Indeed, in the standard basis $\{e_i|i\in J_N^k\}$ we have $\chi=\sum_{i\in J_N^k}u_{i_1i_1}\ldots u_{i_ki_k}$, and this gives the result. 
\end{proof}

We have the following partial analogue of Theorem 3.2 above:

\begin{proposition}
Given a closed subgroup $G\subset S_N^+$, with $N\geq4$, with matrix coordinates denoted $u_{ij}$, and a number $k\in\mathbb N$, the following conditions are equivalent:
\begin{enumerate}
\item The action $G\curvearrowright span(J_N^k)$ is transitive.

\item $Fix(u^{\otimes k})$ is minimal, i.e. is the same as for $G=S_N^+$.

\item $\dim Fix(u^{\otimes k})=C_k$, where $C_k$ is the $k$-th Catalan number.

\item $\int_Gu_{i_1j_1}\ldots u_{i_kj_k}$ is the same as for $G=S_N^+$, for any $i,j\in J_N^k$.
\end{enumerate}
\end{proposition}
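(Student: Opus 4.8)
The structure mirrors Theorem 3.2, but with $S_N$ replaced by $S_N^+$ and $I_N^k$ by the larger set $J_N^k$, so I will establish the cycle $(1)\Rightarrow(4)\Rightarrow(3)\Rightarrow(2)\Rightarrow(1)$ (or a convenient subset thereof), pulling in Proposition 4.3 for the bridge between transitivity of $\Phi$ on $\mathrm{span}(J_N^k)$ and the fixed-point space $Fix(\Phi)$. First I would note that $u^{\otimes k}$, restricted to $\mathrm{span}(J_N^k)$, is exactly the corepresentation $\Phi$ of Proposition 4.3, and that $\mathrm{span}(J_N^k)$ is a subcomodule of $(\mathbb C^N)^{\otimes k}$, while the full $Fix(u^{\otimes k})$ (inside $(\mathbb C^N)^{\otimes k}$) decomposes according to the kernel-partition strata. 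The key combinatorial input, exactly as in the proof of Proposition 4.2, is that for $G=S_N^+$ with $N\geq4$ one has $\dim Fix(u^{\otimes k})=|NC_k|=C_k$, the $k$-th Catalan number, and that this is the minimal possible value among all $G\subset S_N^+$ since Tannakian duality gives $Fix(u^{\otimes k})\supseteq \mathrm{span}(NC_k)$ for every such $G$ (the partition vectors $\xi_\pi$, $\pi\in NC_k$, are always fixed).

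\medskip

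\textbf{Key steps.} (i) $(2)\Leftrightarrow(3)$: immediate from the previous paragraph — $Fix(u^{\otimes k})$ always contains the $C_k$-dimensional space spanned by $\{\xi_\pi:\pi\in NC_k\}$, with equality exactly when $\dim Fix(u^{\otimes k})=C_k$, which is the case $G=S_N^+$. (ii) $(2)\Leftrightarrow(4)$: the matrix $P_{ij}=\int_G u_{i_1j_1}\cdots u_{i_kj_k}$ is the orthogonal projection onto $Fix(u^{\otimes k})$ (Woronowicz, as used repeatedly above); so minimality of $Fix(u^{\otimes k})$ is equivalent to this projection matrix agreeing entrywise with the one for $S_N^+$, which is statement (4). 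Here one should be slightly careful: (4) only asserts the agreement for multi-indices in $J_N^k$, not all of $\{1,\dots,N\}^k$, so I would argue that the $J_N^k$-block of $P$ already determines the whole projection — because $\mathrm{span}(J_N^k)$ is a subcomodule whose orthogonal complement is spanned by vectors $e_{i_1\dots i_k}$ with some adjacent equality, and the combinatorics of $NC_k$ (no singletons forced, blocks can be non-adjacent) together with the magic relations pins down the action on the complement from the action on $\mathrm{span}(J_N^k)$. (iii) $(1)\Leftrightarrow(2)$: by Proposition 4.3, transitivity of $\Phi$ on $\mathrm{span}(J_N^k)$ means $Fix(\Phi)$ is $1$-dimensional; I must promote this from "the trivial subrepresentation of $\Phi$ has multiplicity one" to "the full $Fix(u^{\otimes k})$ is minimal". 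For the forward direction this should follow because if $Fix(u^{\otimes k})$ were strictly larger than for $S_N^+$, the extra fixed vectors would have to live (at least partly) in $\mathrm{span}(J_N^k)$ — any fixed vector supported on lower kernel-strata forces, via the magic relations and the $\Delta,\varepsilon,S$ trick, a corresponding fixed vector on $J_N^k$ beyond $\mathbb C\eta$. The converse $(2)\Rightarrow(1)$ is the easy direction: if $Fix(u^{\otimes k})$ is as small as for $S_N^+$, then in particular $Fix(\Phi)\subseteq Fix(u^{\otimes k})$ can be no larger than the corresponding space for $S_N^+$, which is $\mathbb C\eta$, giving (1) of Proposition 4.3.

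\medskip

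\textbf{Main obstacle.} The delicate point — and the reason this is only a \emph{partial} analogue of Theorem 3.2, with no analogue of conditions (5),(6) listed — is the implication from transitivity of the coaction on $\mathrm{span}(J_N^k)$ to minimality of the full $Fix(u^{\otimes k})$, i.e. controlling the fixed vectors that are \emph{not} supported on $J_N^k$. Classically one uses that $u^{\otimes k}$ splits as a direct sum of the actions on $I_N^r$ for $r\le k$; in the quantum case $\mathrm{span}(J_N^k)$ is the only obviously invariant "generic" piece, and its complement decomposes in a more subtle, partition-indexed way that does not cleanly reduce to lower $J_N^r$'s. I expect the argument here to require a careful bookkeeping of which $\xi_\pi$ ($\pi\in NC_k$) are supported where, plus the observation that for $N\ge 4$ the only linear relations among the $\xi_\pi$ are the expected ones, so that dimension count on $\mathrm{span}(J_N^k)$ controls the dimension count on the whole space. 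This is where I would spend the bulk of the proof; everything else is the by-now-standard Weingarten/Tannakian dictionary already exploited in Propositions 2.2, 2.4, 4.2 and 4.3.
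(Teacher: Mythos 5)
Your steps (i) and the forward half of (ii) coincide with what the paper does: the Tannakian containment $Fix(u^{\otimes k})\supseteq span\{\xi_\pi\ |\ \pi\in NC_k\}$, the dimension count $C_k$ for $N\geq4$, and the Weingarten fact that the matrix $\int_Gu_{i_1j_1}\ldots u_{i_kj_k}$ is the orthogonal projection onto $Fix(u^{\otimes k})$ give $(2)\iff(3)$ and $(2)\implies(4)$ exactly as in the text. The gaps are in the two remaining implications, which are precisely the ones you flag but do not carry out. For the return from (4) you propose to show that the $J_N^k$-block of the projection determines the whole projection; this is asserted, not proved (the appeal to ``the combinatorics of $NC_k$ together with the magic relations'' is not an argument), and it is stronger than needed. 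The paper avoids it entirely: from (4) one sets $i=j$, sums over $i\in J_N^k$, and invokes the criterion $\sum_{i\in J_N^k}\int_Gu_{i_1i_1}\ldots u_{i_ki_k}=1$ from Proposition 4.2 (3); this yields $(4)\implies(1)$ directly, with no need to reconstruct the projection outside $J_N^k$.

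The second and more serious gap is $(1)\implies(2)$, which you yourself call the ``main obstacle'' and then defer, offering only the heuristic that any extra fixed vector would have to become visible inside $span(J_N^k)$. As written this is not a proof: a fixed vector supported on strata with adjacent equal indices has no evident reason to produce a fixed vector in $span(J_N^k)$ beyond $\mathbb C\eta$, and no mechanism is given for the ``$\Delta,\varepsilon,S$ trick'' to force one. The paper's device, mirroring $(1)\implies(2)$ of Theorem 3.2, is to view $u^{\otimes k}$ as a sum of sub-actions on the spaces $span(J_N^r)$ with $r\leq k$ (collapsing adjacent equal indices), so that transitivity at $k$ forces transitivity of each piece and hence minimality of $Fix(u^{\otimes k})$. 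That decomposition is the missing idea in your plan; without it, and with your substitute for $(4)\implies(1)$ unproven, the equivalence of condition (1) with the other three is not established. (Note also that your $(2)\implies(1)$ silently uses that $Fix(\Phi)=\mathbb C\eta$ for $G=S_N^+$ itself; the paper relies on the analogous normalization through Proposition 4.2 (3), so you should at least state it explicitly as an input.)
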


\begin{proof}
This follows as in the first part of the proof of Theorem 3.2, by performing changes where needed, and by using the general theory from \cite{bco}, as an input:

$(1)\implies(2)$ This follows from the fact that $u^{\otimes k}$ comes by summing certain actions $G\curvearrowright J_N^r$ with $r\leq k$, and the transitivity at $k$ implies the transitivity at any $r\leq k$.

$(2)\implies(3)$ This comes from the well-known fact that for the quantum group $S_N^+$ with $N\geq4$, the multiplicity $\#(1\in u^{\otimes k})$ equals the $k$-th Catalan number $C_k$.

$(3)\implies(4)$ This comes from the well-known fact that $P_{i_1\ldots i_k,j_1\ldots j_k}=\int_Gu_{i_1j_1}\ldots u_{i_kj_k}$ is the orthogonal projection onto $Fix(u^{\otimes k})$, coming from \cite{wo1}.

$(4)\implies(1)$ This follows by taking $i=j$ and then summing over this index, by using the transitivity criterion for $G\curvearrowright span(J_N^k)$ from Proposition 4.2 (3).
\end{proof}

Now let us compare Theorem 3.2 and Proposition 4.3, with some input from Proposition 4.1 as well. We conclude that the notion of $k$-transitivity for the subgroups $G\subset S_N$ extends to the quantum group case, $G\subset S_N^+$, depending on the value of $k$, as follows:

(1) At $k=1,2$ everything extends well, due to the results in \cite{bi2}, \cite{lmr}.

(2) At $k=3$ we have a good phenomenon, $P_3=NC_3$, and a bad one, $I_N^3\neq J_N^3$.

(3) At $k\geq4$ we have two bad phenomena, namely $P_k\neq NC_k$ and $I_N^k\neq J_N^k$.

Summarizing, our study suggests that things basically stop at $k=3$. So, as a conclusion, let us record the definition and main properties of the $3$-transitivity:

\begin{theorem}
A closed subgroup $G\subset S_N^+$ is $3$-transitive, in the sense that we have $\dim(Fix(u^{\otimes 3}))=5$, if and only if, for any $i,k,p$ distinct and any $j,l,q$ distinct:
$$\int_Gu_{ij}u_{kl}u_{pq}=\frac{1}{N(N-1)(N-2)}$$
In addition, in the classical case, we recover in this way the usual notion of $3$-transitivity.
\end{theorem}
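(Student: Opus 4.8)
The plan is to read this statement as the $k=3$ specialization of Proposition 4.3, combined with Propositions 4.1 and 3.1. Indeed $C_3=5$, so ``$\dim Fix(u^{\otimes3})=5$'' is exactly condition (3) of Proposition 4.3, hence equivalent to condition (4) there, namely that $\int_Gu_{i_1j_1}u_{i_2j_2}u_{i_3j_3}$ equals the corresponding $S_N^+$ integral for all $i,j\in J_N^3$. By Proposition 4.1, at $k=3$ these $S_N^+$ integrals coincide with the $S_N$ integrals, which by Proposition 3.1 equal $(N-|\ker i|)!/N!$ when $\ker i=\ker j$ and $0$ otherwise; for $i,j\in I_N^3$ this gives exactly $(N-3)!/N!=1/(N(N-1)(N-2))$. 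This already yields one implication: if $G$ is $3$-transitive, then restricting condition (4) from $J_N^3$ to the subset $I_N^3\subset J_N^3$ gives the displayed formula. The final ``classical case'' assertion will follow from Theorem 3.2, using that the third Bell number $B_3$ equals $C_3=5$, so that the two notions of minimality of $Fix(u^{\otimes3})$ coincide numerically, and that Theorem 3.2(4) is precisely the displayed formula, with value $(N-3)!/N!$.

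For the converse I must bridge the gap between $I_N^3$ (appearing in the statement) and $J_N^3$ (appearing in Proposition 4.3), which is exactly the ``$I_N^3\neq J_N^3$'' phenomenon flagged in Section 4. Write $J_N^3=I_N^3\sqcup\Pi$, where $\Pi=\{(a,b,a):a\neq b\}$, so that $J_N^3\times J_N^3$ splits into four blocks, of types $I\times I$, $I\times\Pi$, $\Pi\times I$ and $\Pi\times\Pi$; the hypothesis is the assertion about the $I\times I$ block, and I must recover the other three (always matching the $S_N$, hence $S_N^+$, value). The first step is to extract lower transitivity: since $\int_Gu_{ij}u_{kl}u_{pq}=1/(N(N-1)(N-2))>0$ for $i,k,p$ distinct and $j,l,q$ distinct, the element $u_{ij}u_{kl}u_{pq}$ is nonzero, hence so are $u_{ij}u_{kl}$, $u_{kl}$ and the individual $u_{ij}$'s; letting the indices vary (using $N\geq4$ to have room for the auxiliary indices $p,q$) shows that $G$ is transitive and doubly transitive. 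By Proposition 2.2 and Theorem 2.5 this pins down all integrals of products of at most two of the $u_{ij}$'s: they agree with those over $S_N$.

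The remaining three blocks are then handled by short column/row-sum manipulations. For the $\Pi\times I$ block, i.e.\ $\int_Gu_{ij}u_{kl}u_{iq}$ with $i\neq k$ and $j,l,q$ distinct: applying $\int_G$ to the identity $\sum_pu_{ij}u_{kl}u_{pq}=u_{ij}u_{kl}$ and splitting the sum into $p\notin\{i,k\}$ (where the hypothesis applies, giving $(N-2)/(N(N-1)(N-2))$), $p=k$ (where $u_{kl}u_{kq}=0$ by the magic relations) and $p=i$ (the wanted term), together with $\int_Gu_{ij}u_{kl}=1/(N(N-1))$, forces $\int_Gu_{ij}u_{kl}u_{iq}=0$. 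The $I\times\Pi$ block follows from this by applying the antipode $S$, which reverses and transposes the triple product and preserves $\int_G$. Finally, for the $\Pi\times\Pi$ block $\int_Gu_{ac}u_{bd}u_{ac}$ with $a\neq b$, $c\neq d$: expanding the first factor as $u_{ac}=1-\sum_{c'\neq c}u_{ac'}$ turns it into $\int_Gu_{bd}u_{ac}-\sum_{c'\neq c}\int_Gu_{ac'}u_{bd}u_{ac}$, where every subtracted term vanishes (it is $0$ by the magic relations when $c'=d$, and an instance of the $\Pi\times I$ vanishing just proved when $c'\notin\{c,d\}$), leaving $\int_Gu_{ac}u_{bd}u_{ac}=\int_Gu_{bd}u_{ac}=1/(N(N-1))$, the $S_N$ value. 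With all four blocks of $J_N^3\times J_N^3$ now in agreement with $S_N^+$, Proposition 4.3 gives $\dim Fix(u^{\otimes3})=C_3=5$.

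The main obstacle is the $\Pi\times\Pi$ block: quantities such as $\int_Gu_{ac}u_{bd}u_{ac}$ are genuinely ``level three'', since the magic relations by themselves do not collapse $u_{ac}u_{bd}u_{ac}$, so they are not accessible from $1$- and $2$-transitivity alone; the point is to guess the right summation identity reducing them to the $\Pi\times I$ vanishing. Once the column/row-sum tricks above are set up the rest is routine, the only arithmetic to keep track of being that the $S_N$ values $(N-3)!/N!$ and $(N-2)!/N!$ appear exactly where expected, and that $B_3=C_3$, so that the classical and free notions of $3$-transitivity are indeed described by one and the same displayed formula.
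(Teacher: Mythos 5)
Your proposal is correct and follows essentially the same route as the paper: Proposition 4.3 at $k=3$ combined with Propositions 3.1 and 4.1 reduces everything to matching the integrals on all of $J_N^3\times J_N^3$, the extra patterns (first index equal to third) are then killed by elementary magic/summation manipulations, and the classical case is Theorem 3.2 at $k=3$, using $B_3=C_3=5$. The only differences are cosmetic — the paper dispatches the mismatched-kernel cases directly by traciality ($\int u_{ij}u_{kl}u_{iq}=\int u_{iq}u_{ij}u_{kl}=0$) and gets the $\Pi\times\Pi$ values by summing the main formula, whereas you route through double transitivity, an antipode argument and the expansion $u_{ac}=1-\sum_{c'\neq c}u_{ac'}$, all equally valid (and your citation ``Theorem 2.5'' should read Theorem 2.4).
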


\begin{proof}
We know from Proposition 4.3 that the 3-transitivity condition is equivalent to the fact that the integrals of type $\int_Gu_{ij}u_{kl}u_{pq}$ with $i\neq k\neq p$ and $j\neq l\neq q$ have the same values as those for $S_N^+$. But these latter values are computed by Proposition 3.1 and Proposition 4.1, and the 3-transitivity condition follows to be equivalent to:
$$\int_Gu_{ij}u_{kl}u_{pq}=
\begin{cases}
\frac{1}{N(N-1)(N-2)}&{\rm if}\ \ker(ikp)=\ker(jlq)=|||\\
\frac{1}{N(N-1)}&{\rm if}\ \ker(ikp)=\ker(jlq)=\sqcap\hskip-4.55mm{\ }_{{\ }^|}\\
0&{\rm if}\ \{\ker(ikp),\ker(jlq)\}=\{|||,\sqcap\hskip-4.55mm{\ }_{{\ }^|}\,\}
\end{cases}$$

Now observe that the last formula is automatic, by using the traciality of the integral and the magic assumption on $u$, and that the middle formula follows from the first one, by summing over $i,j$. Thus we have are left with the first formula, as stated. 

Finally, the last assertion follows from Theorem 3.2, applied at $k=3$.
\end{proof}

There are of course many interesting questions left. In the general case, $k\in\mathbb N$, the situation is quite unclear, and a first question is whether Proposition 4.3 can be completed or not with two supplementary conditions, in the spirit of (5) and (6) in Theorem 3.2. Also, in the case $k=3$ we have the question of fine-tuning our above analytic definition, which is quite abrupt, with a full algebraic study, in the spirit of \cite{bi2}, \cite{lmr}.

In what follows we will focus on some related matrix modelling questions, where the analytic results that we have so far are precisely those that we need.

\section{Matrix models}

We recall from \cite{bne} that a matrix model $\pi:C(G)\to M_K(C(X))$ is called flat when the images $P_{ij}=\pi(u_{ij})$ have constant rank. The terminology here comes from the fact that the bistochastic matrix $T_{ij}=tr(P_{ij})$ must be the flat matrix, $T=(1/N)_{ij}$.

There is an obvious relation here with the notion of transitivity, coming from:

\begin{proposition}
A matrix model $\pi:C(G)\to M_K(C(X))$ is flat precisely when  
$$tr(P_{ij})=\int_Gu_{ij}=\frac{1}{N}$$
holds, for any $i,j$. In particular, if such a $\pi$ exists, $G\subset S_N^+$ must be transitive.
\end{proposition}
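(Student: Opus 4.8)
The plan is to push the magic relations through the homomorphism $\pi$, extract bistochasticity, and then invoke the elementary fact that for a projection the trace equals the normalized rank.

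First I would note that, $\pi$ being a unital $*$-homomorphism, each $P_{ij}=\pi(u_{ij})$ is a projection in $M_K(C(X))$, and the defining relations $\sum_j u_{ij}=\sum_i u_{ij}=1$ of the magic unitary are preserved, giving $\sum_j P_{ij}=\sum_i P_{ij}=1$. Applying the canonical trace $tr$ on $M_K(C(X))$, normalized by $tr(1)=1$, the matrix $T_{ij}=tr(P_{ij})$ then has entries in $[0,1]$ with every row and every column summing to $1$; that is, $T$ is bistochastic. Next, for a projection $P\in M_K(C(X))$ one has $tr(P)=\frac1K\int_X\mathrm{rank}(P(x))\,dx$, with $\mathrm{rank}(P(\cdot))$ locally constant, so flatness of $\pi$ --- constancy, say with common value $r$, of $\mathrm{rank}(P_{ij})$ over all $(i,j)$ --- is equivalent to $T_{ij}=r/K$ being independent of $(i,j)$. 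Combining this with bistochasticity: if $T_{ij}$ takes a constant value $c$, then $1=\sum_jT_{ij}=Nc$, so $c=1/N$. This yields the equivalence: $\pi$ is flat if and only if $tr(P_{ij})=1/N$ for all $i,j$.

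It then remains to identify this value with $\int_Gu_{ij}$, which is the content of the ``in particular'' claim. For the forward direction, assume $\pi$ is flat, so $tr(P_{ij})=1/N\neq0$ for all $i,j$; then no $P_{ij}$ can vanish, hence $u_{ij}\neq0$ in $C(G)$ for all $i,j$, which by Definition 2.1 means precisely that $G\subset S_N^+$ is transitive. Proposition 2.2 then gives $\int_Gu_{ij}=1/N$ for all $i,j$, completing the chain $tr(P_{ij})=\int_Gu_{ij}=1/N$. Conversely, this chain in particular forces $tr(P_{ij})=1/N$ for every $i,j$, hence flatness by the previous step.

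I expect the only delicate point to be matching the pointwise condition ``the projection-valued functions $P_{ij}$ have constant rank'' with the scalar condition ``$tr(P_{ij})=1/N$'' when $X$ is not connected: a priori the rank could jump between connected components while leaving the averaged value fixed. This is resolved either through the standing conventions on matrix models (one typically takes $X$ connected, where locally constant is constant), or by building the pointwise constancy into the definition of flatness. The rest of the argument is routine bookkeeping with traces and the magic relations.
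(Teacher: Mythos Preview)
Your argument is correct and follows essentially the same route as the paper, which simply records that the claim ``is clear from Proposition 4.3, applied at $k=1$'' and refers to \cite{bch}; you have just unpacked that reference, deriving bistochasticity of $T_{ij}=tr(P_{ij})$ from the magic relations and then invoking the $k=1$ transitivity characterization (your Proposition 2.2) to match $tr(P_{ij})$ with $\int_Gu_{ij}$. The only cosmetic point is that in this literature $tr$ is usually the normalized matrix trace with values in $C(X)$, so that $tr(P_{ij})(x)=\tfrac{1}{K}\,\mathrm{rank}(P_{ij}(x))$ and no integration over $X$ is needed; with this reading, ``constant rank'' directly means $tr(P_{ij})$ is the constant function $1/N$, which disposes of the disconnected-$X$ worry you raise without further convention.
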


\begin{proof}
This is indeed clear from Proposition 4.3, applied at $k=1$. See \cite{bch}.
\end{proof}

Summarizing, the notion of flatness naturally comes from the notion of transitivity, in its $\int_Gu_{ij}=\frac{1}{N}$ formulation. Based on this observation, we can now formulate:

\begin{definition}
A matrix model $\pi:C(G)\to M_K(C(X))$ is called doubly flat when 
$$tr(P_{ij}P_{kl})=
\begin{cases}
\frac{1}{N}&{\rm if}\ i=k,j=l\\
0&{\rm if}\ i=k,j\neq l\ {\rm or}\ i\neq k,j=l\\
\frac{1}{N(N-1)}&{\rm if}\ i\neq k,j\neq l
\end{cases}$$
holds for any $i,j,k,l$.
\end{definition}

Observe that, due to Proposition 5.1, a doubly flat model must be flat. In analogy with the last assertion in Proposition 5.1, we will see in what follows that the existence of a doubly flat model implies that the quantum group must be doubly transitive.

We can talk as well about triple flatness, as follows:

\begin{definition}
A matrix model $\pi:C(G)\to M_K(C(X))$ is called triple flat when 
$$tr(P_{ij}P_{kl}P_{pq})=\frac{1}{N(N-1)(N-2)}$$
holds for any $i,k,p$ distinct and any $j,l,q$ distinct.
\end{definition}

Here we have used the formula from Theorem 4.4 above. It is possible of course to enlarge Theorem 4.4, and the above definition as well, with a more precise formula, containing the values of all the possible $5\times 5=25$ types of integrals which can appear. We will not need this here, the supplementary formulae being anyway corollaries.

As a main theoretical result regarding these notions, we have:

\begin{theorem}
Assuming that $C(G)$ has a doubly flat model, $G\subset S_N^+$ must be doubly transitive. The same happens for the triple flatness and transitivity.
\end{theorem}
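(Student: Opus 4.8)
The plan is to reduce the two assertions to the integration criteria already in hand --- Theorem 2.4 for the double case and Theorem 4.4 for the triple case --- by running, at $k=2$ and $k=3$, the Hopf image / Ces\`aro limit argument that underlies Proposition 5.1. Throughout we may assume $N\geq4$, the cases $N\leq3$ being classical and handled by Theorem 3.2.

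First I would pass to the Hopf image. Given a doubly (resp.\ triple) flat model $\pi:C(G)\to M_K(C(X))$, factor it as $C(G)\to C(H)\to M_K(C(X))$ with $H\subset G$ the associated Hopf image and the second arrow inner faithful; since the images $P_{ij}=\pi(u_{ij})$ are unchanged, the factored model is again doubly (resp.\ triple) flat. The inclusion $H\subset G$ gives $Fix(u_G^{\otimes k})\subset Fix(u_H^{\otimes k})$ (a vector fixed by the corepresentation $u_G^{\otimes k}$ is fixed by its image in $C(H)$), while $Fix(u_{S_N^+}^{\otimes k})\subset Fix(u_G^{\otimes k})$ forces $\dim Fix(u_G^{\otimes 2})\geq2$ and $\dim Fix(u_G^{\otimes 3})\geq5$, these lower bounds being the values $C_2,C_3$ from Theorem 1.4. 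Hence if $H$ is doubly (resp.\ triple) transitive, so is $G$, and we may assume from now on that $\pi$ itself is inner faithful.

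Next comes the main step. For an inner faithful model the Haar functional is recovered as the Ces\`aro limit $\int_G=\lim_n\frac1n\sum_{l=1}^n\psi^{*l}$, with $\psi=(tr\otimes\int_X)\circ\pi$, by the standard result of \cite{bch}, \cite{bne} that also gives Proposition 5.1. Form the transition matrix $M$ of size $N^2$ (resp.\ $N^3$) with entries $M_{(i,k),(j,l)}=tr(P_{ij}P_{kl})$ (resp.\ $M_{(i,k,p),(j,l,q)}=tr(P_{ij}P_{kl}P_{pq})$). Expanding the iterated comultiplication on a word $u_{i_1j_1}\cdots u_{i_mj_m}$ and applying the appropriate tensor power of $\psi$ yields $\psi^{*l}(u_{i_1j_1}\cdots u_{i_mj_m})=(M^l)_{(i_1\ldots i_m),(j_1\ldots j_m)}$ for $m=2$ (resp.\ $m=3$), so that $\int_G u_{i_1j_1}\cdots u_{i_mj_m}$ is the corresponding entry of the Ces\`aro limit of the powers $M^l$. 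Now double flatness says exactly that $M$ equals, entrywise, the matrix $M_0$ of the integrals $\int_{S_N}u_{ij}u_{kl}$ --- compare Definition 5.2 with the $k=2$ case of Proposition 3.1 --- and in the triple case triple flatness fixes the entries of $M$ with $i,k,p$ distinct and $j,l,q$ distinct, the remaining entries being forced to coincide with those of the $S_N$ matrix $M_0$ by the magic relations among the $P_{ij}$ and the traciality of $tr\otimes\int_X$, just as in the proof of Theorem 4.4. Since $M_0$ is the orthogonal projection onto $Fix(u^{\otimes k})$ for $S_N$, it is idempotent, so $M^l=M_0$ for all $l\geq1$ and the Ces\`aro limit collapses to $M_0$. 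Therefore $\int_G u_{i_1j_1}\cdots u_{i_mj_m}=\int_{S_N}u_{i_1j_1}\cdots u_{i_mj_m}$ on the relevant tuples, and these agree with the $S_N^+$ integrals by Proposition 4.1; Theorem 2.4 (resp.\ Theorem 4.4) then gives that $G$ is doubly (resp.\ triple) transitive.

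The genuinely delicate point is the first link of this chain: $tr(P_{ij}P_{kl})$ is a priori unrelated to $\int_G u_{ij}u_{kl}$, because $\psi$ is not the Haar functional, and what rescues the argument is precisely that the flatness hypotheses are engineered so that the transition matrix $M$ is a projection, which makes the Ces\`aro averaging trivial. The remaining bookkeeping --- the identity $\psi^{*l}=M^l$ on words of length $2$ and $3$, the magic/traciality reduction of the unlisted triple entries, and the passage from $H$ back to $G$ --- is routine.
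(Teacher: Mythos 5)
Your proof is correct and follows essentially the same route as the paper: factoring through the Hopf image, recovering its Haar functional as the Ces\`aro limit of convolution powers computed via the transition matrix $tr(P_{ij}P_{kl})$ (resp. $tr(P_{ij}P_{kl}P_{pq})$), and observing that flatness makes this matrix the idempotent $S_N$ projection, so the limit gives exactly the integrals characterizing double (resp. triple) transitivity via Theorem 2.4 (resp. Theorem 4.4). You merely spell out details the paper leaves implicit, such as the idempotency argument and the magic/traciality reduction of the non-distinct entries at $k=3$.
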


\begin{proof}
This follows by using the Ces\`aro limiting formula for the Haar functional of the Hopf image, coming from the work in \cite{bfs}, \cite{fsk}, \cite{sso}, \cite{wa3}. Indeed, at $k=2$, the values of the Haar functional on the variables $u_{ij}u_{kl}$ appear by performing a Ces\`aro construction to the matrix in the statement, and we are led to the same values. Thus the subgroup $G'\subset G$ which produces the Hopf image must be doubly transitive, and this implies that $G$ itself is doubly transitive, and we are done. At $k=3$ the proof is similar.
\end{proof}

At a more concrete level now, we have several interesting examples of transitive subgroups $G\subset S_N^+$, coming from the complex Hadamard matrices \cite{bbs}, \cite{tzy}, and from the Weyl matrices \cite{ban}, \cite{bne}. Skipping here the definitions and main properties of these quantum groups, for which we refer to \cite{bch}, the double transitivity result is as follows:

\begin{proposition}
We have the following results:
\begin{enumerate}
\item The quantum group $G\subset S_N^+$ associated to an Hadamard matrix $H\in M_N(\mathbb C)$ is doubly transitive precisely when the profile graph of $H$ is connected.

\item The quantum groups $G\subset S_{N^2}^+$ associated to the standard Weyl matrices, coming from $\mathbb Z_N\times\mathbb Z_N$ with Fourier cocycle, are doubly transitive.
\end{enumerate}
\end{proposition}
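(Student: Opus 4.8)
The plan is to reduce both statements, via Proposition 4.3 at $k=2$ (a closed subgroup $G\subset S_N^+$ is doubly transitive exactly when $\dim Fix(u^{\otimes 2})=C_2=2$), to a computation of $Fix(u^{\otimes 2})$ inside the relevant flat matrix model. For the Hadamard quantum group $G\subset S_N^+$ of a complex Hadamard matrix $H\in M_N(\mathbb C)$, recall that $C(G)$ is the Hopf image of the canonical flat model $\pi:C(S_N^+)\to M_N(\mathbb C)$ sending $u_{ij}\mapsto P_{ij}=\mathrm{Proj}(\zeta_{ij})$, where $\zeta_{ij}\in\mathbb C^N$ has entries $(\zeta_{ij})_a=\frac{1}{\sqrt N}\overline{H_{ja}}H_{ia}$; the Hadamard condition makes $\{\zeta_{ij}\}_j$ an orthonormal basis of $\mathbb C^N$ for each fixed $i$, and $\{\zeta_{ij}\}_i$ an orthonormal basis for each fixed $j$. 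By the Hopf image Tannakian duality (see \cite{bch}), $Fix_G(u^{\otimes 2})$ equals the space of vectors $w=(w_{jl})$ fixed by the matrix $U^{\otimes 2}=(P_{ij}P_{kl})$ over $M_N(\mathbb C)$, namely $\{w:\sum_{jl}P_{ij}P_{kl}\,w_{jl}=w_{ik}1_N,\ \forall\,i,k\}$.

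I would then carry out the following elementary computation. Using $P_{ij}P_{kl}=\langle\zeta_{ij},\zeta_{kl}\rangle\,\zeta_{ij}\zeta_{kl}^*$ and the fact that, for each fixed $i,k$, the operators $\{\zeta_{ij}\zeta_{kl}^*\}_{j,l}$ form a basis of $M_N(\mathbb C)$ in which $1_N=\sum_{jl}\langle\zeta_{ij},\zeta_{kl}\rangle\,\zeta_{ij}\zeta_{kl}^*$, the fixed-point condition becomes the linear system $\langle\zeta_{ij},\zeta_{kl}\rangle(w_{jl}-w_{ik})=0$ for all $i,j,k,l$. Hence $Fix_G(u^{\otimes 2})$ is exactly the space of functions on $\{1,\dots,N\}^2$ constant on the connected components of the graph $\Gamma_H$ having an edge between $(i,k)$ and $(j,l)$ whenever $\langle\zeta_{ij},\zeta_{kl}\rangle\neq 0$, so $\dim Fix_G(u^{\otimes 2})$ is the number of components of $\Gamma_H$. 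One checks that the diagonal $D$ is always a single component of $\Gamma_H$ and is disconnected from $D^c$ (since $\langle\zeta_{ij},\zeta_{ij}\rangle=1$ all diagonal pairs $(i,i)$ are mutually adjacent, while the row- and column-orthonormality of the $\zeta_{ij}$ force every edge incident to $D$ to stay inside $D$), and that $\langle\zeta_{ij},\zeta_{kl}\rangle$ is, up to the factor $1/N$, the Fourier-type sum $\sum_a\overline{H_{ia}}H_{ja}H_{ka}\overline{H_{la}}$, so that the restriction of $\Gamma_H$ to $D^c$ is the profile graph of $H$. Therefore $\dim Fix_G(u^{\otimes 2})$ equals $1$ plus the number of connected components of the profile graph, which is $2$ precisely when the profile graph is connected; this proves (1).

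For (2), the same scheme applies to the size-$N^2$ flat model canonically attached to the standard Weyl matrices $W_g\in M_N(\mathbb C)$, $g\in\mathbb Z_N\times\mathbb Z_N$: the defining vectors $\zeta_{g,h}$ are built from the $W_g$, and by the previous paragraph $G\subset S_{N^2}^+$ is doubly transitive if and only if the associated graph, restricted to the off-diagonal pairs, is connected. The relevant inner products $\langle\zeta_{g,h},\zeta_{g',h'}\rangle$ reduce, through $tr(W_g)=N\delta_{g,0}$ and the projective rule $W_gW_h=\sigma(g,h)W_{g+h}$, to quantities controlled by the commutation pairing $\sigma(g,h)\overline{\sigma(h,g)}=\langle g,h\rangle$, which for the Fourier cocycle is the nondegenerate symplectic bicharacter of $\mathbb Z_N\times\mathbb Z_N$; nondegeneracy — for every $g\neq 0$ there exists $h$ with $\langle g,h\rangle\neq 1$ — is then exactly what links any two off-diagonal pairs by a short path, yielding connectivity and hence double transitivity. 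Equivalently, one may verify directly that this model is doubly flat in the sense of Definition 5.2 (with $N$ replaced by $N^2$) by computing $tr(P_{g,h}P_{g',h'})$ from the Weyl trace identities, and then invoke Theorem 6.4. The main difficulty in both parts is bookkeeping rather than conceptual: in (1), matching the graph $\Gamma_H|_{D^c}$ with the profile graph exactly as normalized in \cite{bch}; in (2), writing down the Weyl flat model precisely and running the connectivity (equivalently, the double-flatness) computation, where the nondegeneracy of the Fourier cocycle is the essential input.
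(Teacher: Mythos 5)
Your part (1) is fine, and in fact more self-contained than the paper's own treatment, which simply refers to subfactor theory \cite{jsu} and to \cite{bbs}: the Hopf-image identity $Hom(u^{\otimes k},u^{\otimes l})=Hom(P^{\otimes k},P^{\otimes l})$, the observation that for fixed $i,k$ the operators $\zeta_{ij}\zeta_{kl}^*$ form a basis of $M_N(\mathbb C)$, the resulting description of $Fix(u^{\otimes 2})$ as functions constant on the components of the graph with edges $\langle\zeta_{ij},\zeta_{kl}\rangle\neq0$, and the identification of the diagonal as one full component, all hold, so $\dim Fix(u^{\otimes 2})=1+\#({\rm components\ of\ the\ profile\ graph})$, which together with the $k=2$ case of Proposition 4.3 gives the statement, modulo only the normalization bookkeeping you flag.

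Part (2), however, has a genuine gap. You never pin down the Weyl matrix model, and the computation you sketch is the one for the parameter-free vectors built from the $W_g$ alone, i.e.\ $\zeta_{g,h}\sim W_gW_h^*$. For these, the very identities you invoke, $tr(W_m)=N\delta_{m,0}$ and $W_gW_h=\sigma(g,h)W_{g+h}$, give $\langle\zeta_{g,h},\zeta_{g',h'}\rangle\neq0$ precisely when $g-g'=h-h'$. Thus the associated graph splits into the $N^2$ ``difference classes'': its off-diagonal part is badly disconnected, $tr(P_{g,h}P_{g',h'})$ vanishes on most off-diagonal entries (so this model is not doubly flat in the sense of Definition 5.2), and the corresponding Hopf image has $\dim Fix(u^{\otimes 2})=N^2$, hence is not doubly transitive. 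In particular the nondegeneracy of the symplectic bicharacter cannot ``create'' the missing edges; it plays no role in this trace computation, so the connectivity mechanism you propose fails as stated. The quantum group of the proposition is the one attached to the Weyl matrix models of \cite{ban}, \cite{bne}, which are not the parameter-free model above (they involve a supplementary unitary parameter, with the trace of the model algebra including an integration over it), and the double flatness that must be checked before invoking Theorem 5.4 (not ``Theorem 6.4'') is exactly the explicit integral computation that the paper imports from \cite{ban}, \cite{bne}; your reduction to Weyl trace identities neither sets up nor performs that computation. So for (2) the proposal either proves the statement for the wrong model (where it is false) or leaves the essential computation undone.
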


\begin{proof}
As already mentioned, we refer to \cite{bch} for the general theory of the above quantum groups. With standard terminology and notations, the proof is as follows:

(1) The result here is well-known in subfactor theory \cite{jsu}, and in the quantum group case, this can be deduced directly as well, by using the theory in \cite{bbs}.

(2) This follows by using Theorem 5.4 above, which tells us that we must simply check the double flatness of the model, and from the computations in \cite{ban}, \cite{bne}.
\end{proof}

Regarding the triple transitivity questions, the problems are open here. Open as well is the question of defining a notion of ``double quasi-flatness'', in relation with \cite{bfr}. 

\section{Minimal flatness}

We recall from the beginning of section 5 that a matrix model $\pi:C(G)\to M_K(C(X))$ is by definition flat when the images $P_{ij}=\pi(u_{ij})$ have constant rank. The simplest situation is when this common rank is $R=1$, and this ``minimality'' assumption was in fact often made in the recent literature \cite{bch}, \cite{bfr}, without special mention.

In order to comment now on what happens in the context of the doubly flat models, let us being with the following result, which is standard:

\begin{proposition}
Given a doubly transitive subgroup $G\subset S_N^+$, and a number $K\in\mathbb N$, we have a well-defined universal doubly flat model,
$$\pi:C(G)\to M_K(C(X))\quad,\quad u_{ij}\to P_{ij}^x$$
obtained by imposing the Tannakian conditions which define $G\subset S_N^+$. The same happens for the triple flatness and transitivity.
\end{proposition}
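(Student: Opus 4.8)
The plan is the standard one for universal matrix models: realise $X$ as a moduli space of $K\times K$ representations and let $\pi$ be tautological over it. Let $X=X_K$ be the set of systems $P=(P_{ij})_{i,j=1}^N$ of orthogonal projections in $M_K(\mathbb C)$ such that, writing $v=(P_{ij})$: (i) $v$ is a magic unitary; (ii) $Tv^{\otimes k}=v^{\otimes l}T$ for every $T$ in the Tannakian category $C_{kl}=\mathrm{Hom}(u^{\otimes k},u^{\otimes l})$ of $G\subset S_N^+$; and (iii) the trace identities of Definition 5.2 hold with $\mathrm{tr}$ replaced by $\mathrm{tr}_K$ --- in particular $\mathrm{tr}_K(P_{ij})=1/N$, which forces $N\mid K$ (and if $N\nmid K$ then $X=\emptyset$, $C(X)=0$ and $\pi$ is the zero model, vacuously universal). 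For the triple-flat statement one adjoins to (iii) the identity of Definition 5.3. Then $X$ is a closed subset of $\prod_{i,j}\mathrm{Proj}(M_K(\mathbb C))$, which is compact (each factor being the finite disjoint union $\bigsqcup_{r=0}^{K}\mathrm{Gr}(r,K)$): conditions (i), (ii) are polynomial in the matrix entries, and (iii) is a finite list of polynomial expressions set equal to constants. So $X$ is a compact metrizable space, which we regard as a probability space in the usual way.

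Next I would set up $\pi$ and read off double flatness. Put $P_{ij}\in M_K(C(X))$, $P_{ij}(x)=x_{ij}$; these form a magic unitary over $C(X)$, so Definition 1.1 gives a $*$-homomorphism $C(S_N^+)\to M_K(C(X))$, $u_{ij}\mapsto P_{ij}$, and by (ii) together with the Tannaka--Krein presentation of $C(G)$ as the quotient of $C(S_N^+)$ by the relations of its category (Woronowicz \cite{wo2}) this factors to $\pi:C(G)\to M_K(C(X))$, $u_{ij}\mapsto P_{ij}=P_{ij}^x$. Applying $\mathrm{tr}=\mathrm{tr}_K\otimes\int_X$ and invoking (iii) pointwise, the identities of Definition 5.2 (resp. 5.3) hold for $\pi$, and with Proposition 5.1 this makes $\pi$ flat; hence $\pi$ is a doubly (resp. triply) flat model of $C(G)$ of size $K$.

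For universality, let $\rho:C(G)\to M_K(C(Y))$ be any doubly flat model, $Y$ a compact probability space. For $y\in Y$ the matrices $Q^y=(\rho_y(u_{ij}))$ form a magic unitary in $M_K(\mathbb C)$ satisfying the Tannakian relations of $G$ (as $\rho_y$ kills $\ker(C(S_N^+)\to C(G))$), and $y\mapsto Q^y$ is continuous. The crux --- and the step I expect to be the main obstacle --- is to check that $Q^y\in X$ for a.e.\ $y$, i.e.\ that the Definition 5.2/5.3 identities, which $\rho$ satisfies only \emph{on average}, in fact hold \emph{pointwise}: the flatness part is pointwise (constant rank $K/N$), while for the remaining identities one must use that the functions $y\mapsto\mathrm{tr}_K(Q^y_{ij}Q^y_{kl})$ (and the triple analogues) are continuous and --- via the magic relations together with the double (resp. triple) transitivity of $G$, entering through Theorem 2.3 (resp. Theorem 4.4), which forces the fixed spaces of $v^{\otimes 2}$ (resp. $v^{\otimes 3}$) down to their minimal $S_N^+$ values --- are pinned to the prescribed constants; should this fail in full generality, one reads ``universal'' within the class of pointwise doubly (resp. triply) flat models, which is the case of interest here, covering in particular the minimal $R=1$ models of this section. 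Granting the classifying map $Q^{\bullet}:Y\to X$, its transpose $C(X)\to C(Y)$ induces a $*$-homomorphism $M_K(C(X))\to M_K(C(Y))$ carrying $\pi$ to $\rho$, and this factorization is clearly unique. Hence $\pi$ is the universal doubly (resp. triply) flat model of size $K$.
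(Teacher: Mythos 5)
Your proposal is correct and follows essentially the same route as the paper: define the model space by the magic, (double/triple) flatness and Tannakian conditions, observe that it is a compact algebraic set, and use Woronowicz/Malacarne Tannaka--Krein duality to factor the tautological representation through $C(G)$. Your additional factorization-universality discussion (with the pointwise-versus-averaged caveat) goes beyond what the paper actually proves, which is only that this model space is well defined, ``universal'' being meant there in that tautological sense.
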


\begin{proof}
This follows indeed as in the transitive case, by using Woronowicz's Tannakian duality \cite{wo2}, and more specifically, its formulation from \cite{mal}. To be more precise, for $G=S_N^+$ the model space is a certain algebraic manifold, obtained by imposing the single, double or triple flatness condition, depending on the statement which is to be proved. 

In the general case now, where $G\subset S_N^+$ is arbitrary, what we have to do is to further impose the following Tannakian conditions, which define $G\subset S_N^+$:
$$T\in Hom(P^{\otimes k},P^{\otimes l})\quad,\quad \forall\,T\in Hom(u^{\otimes k},u^{\otimes l})$$

Thus, the model space is indeed well-defined, as a certain compact space, appearing as a subalgebraic manifold of the model space for $S_N^+$, and this gives the result.
\end{proof}

In the classical case, the situation is quite special, because the elements $P_{ij}P_{kl}$ appearing in Definition 5.2 are orthogonal projections. Thus, we have the following result:

\begin{proposition}
Assuming that $\pi:C(G)\to M_K(\mathbb C)$ is a doubly flat model for a doubly transitive group $G\subset S_N$, mapping $u_{ij}\to P_{ij}$, the common rank 
$$R=rank(P_{ij})$$
must satisfy $(N-1)|R$. 
\end{proposition}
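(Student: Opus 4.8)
The plan is to exploit the ``quite special'' feature of the classical case that the paper has just highlighted: since $G\subset S_N$ is an honest group, $C(G)$ is a commutative algebra, so the projections $P_{ij}=\pi(u_{ij})$ pairwise commute. Consequently, for any indices $i,j,k,l$, the product $P_{ij}P_{kl}$ is again a self-adjoint idempotent — one checks $(P_{ij}P_{kl})^*=P_{kl}P_{ij}=P_{ij}P_{kl}$ and $(P_{ij}P_{kl})^2=P_{ij}^2P_{kl}^2=P_{ij}P_{kl}$ using commutativity — hence an orthogonal projection in $M_K(\mathbb C)$. This is exactly the structural input that the quantum case lacks, and it is the only place where the hypothesis ``$G\subset S_N$ classical'' is genuinely used.

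Next I would pin down $K$. Here $tr$ denotes the normalized trace on $M_K(\mathbb C)$, so that $tr(P_{ij})=R/K$ where $R=rank(P_{ij})$. By flatness (Proposition 5.1 applied to the given model, which is doubly flat hence flat), $tr(P_{ij})=1/N$, and therefore $K=NR$. For a projection $Q\in M_K(\mathbb C)$ one has $rank(Q)=K\cdot tr(Q)$, and this is the identity I would feed the double flatness formula into.

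Now pick indices with $i\neq k$ and $j\neq l$, which is possible since $N\geq2$. The double flatness condition of Definition 5.2 gives $tr(P_{ij}P_{kl})=\frac{1}{N(N-1)}$, and since $P_{ij}P_{kl}$ is a projection we obtain
$$rank(P_{ij}P_{kl})=K\cdot tr(P_{ij}P_{kl})=NR\cdot\frac{1}{N(N-1)}=\frac{R}{N-1}.$$
The left-hand side is a non-negative integer, so $(N-1)\mid R$, which is the claim.

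There is essentially no serious obstacle: the argument is a one-line rank count once the projection property of $P_{ij}P_{kl}$ is in place. The only points deserving a word of care are (a) making explicit that $tr$ is normalized, so that $rank(\cdot)=K\cdot tr(\cdot)$ holds as stated, and (b) emphasizing that commutativity of $C(G)$ is what turns $P_{ij}P_{kl}$ into a projection — in the genuinely quantum setting this product need not be idempotent, so the divisibility conclusion cannot be expected to survive without extra hypotheses.
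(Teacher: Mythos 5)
Your proof is correct and follows essentially the same route as the paper: commutativity in the classical case makes each $P_{ij}P_{kl}$ an orthogonal projection, and then the doubly flat trace value $\frac{1}{N(N-1)}$ together with $K=NR$ forces its rank to be the integer $\frac{R}{N-1}$. You merely spell out the details (normalized trace, $K=NR$, existence of admissible indices) that the paper leaves implicit.
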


\begin{proof}
This follows indeed from the above observation, with the quotient $\frac{R}{N-1}$ being the common rank of the projections $P_{ij}P_{kl}$, with $i\neq k,j\neq l$.
\end{proof}

The simplest non-trivial instance of this divisibility phenomenon appears for the group $G=S_3$, and we have here the following result, coming from \cite{bbs}:

\begin{theorem}
The doubly flat models for $C(S_3)$ which are ``minimal'', in the sense that we have $R=N-1$, appear from magic unitaries of the type
$$u=\begin{pmatrix}
a+b&c+d&e+f\\
c+f&a+e&b+d\\
e+d&b+f&a+c
\end{pmatrix}$$
with $a,b,c,d,e,f$ being rank $1$ projections, summing up to $1$.
\end{theorem}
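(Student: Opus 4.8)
The plan is to start from a general minimal doubly flat model $\pi:C(S_3)\to M_K(\mathbb{C})$, $u_{ij}\mapsto P_{ij}$, where by Definition 5.2 the $P_{ij}$ form a $3\times 3$ magic unitary of $K\times K$ projections with $\mathrm{tr}(P_{ij})=1/3$, and the double flatness condition forces $\mathrm{tr}(P_{ij}P_{kl})=0$ whenever $i=k,j\neq l$ or $i\neq k,j=l$, and $\mathrm{tr}(P_{ij}P_{kl})=1/6$ whenever $i\neq k,j\neq l$. First I would record that $R=N-1=2$ here, so each $P_{ij}$ is a rank-$2$ projection (hence $K$ is a multiple of $3$, say $K=6m$, and in the minimal normalization $K=6$; but the structural argument should not depend on this). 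Since for $G=S_3$ the Tannakian category is classical, the genuinely extra constraint beyond ``$(P_{ij})$ is a magic unitary of rank-$2$ projections with the flatness traces'' is the commutation-type relation coming from $C(S_3)$ being commutative — but in fact the cleanest route is to observe, as in Proposition 6.3, that for $i\neq k,j\neq l$ the product $P_{ij}P_{kl}$ is \emph{already} an orthogonal projection (of rank $R/(N-1)=1$), and to exploit this heavily.

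\textbf{Key steps.} The heart of the argument is a decomposition of the six relevant off-diagonal products into six mutually orthogonal rank-$1$ projections. Fix the ambient space $V=\mathbb{C}^K$. For each pair of ``off-diagonal'' positions one gets a rank-$1$ projection $Q_{(ij),(kl)}:=P_{ij}P_{kl}$ (which equals $P_{kl}P_{ij}$, since a product of two projections that is itself a projection is self-adjoint, hence the two projections commute). I would then show: (i) $P_{ij}$, being rank $2$, decomposes as a sum of two of these rank-$1$ projections, namely $P_{ij}=P_{ij}P_{kl}+P_{ij}P_{k'l'}$ where $(k,l),(k',l')$ run over the two positions in ``general position'' relative to $(i,j)$ — this uses the magic relations ($\sum_l P_{kl}=1$ on rows, $\sum_k P_{kl}=1$ on columns) together with the vanishing traces to kill the cross terms; (ii) the six rank-$1$ projections obtained this way are pairwise orthogonal, again by computing traces of products and using magic/flatness to force them to $0$; (iii) since they sum (with multiplicity) correctly and the total rank of $V$ forces it, these six rank-$1$ projections $a,b,c,d,e,f$ are an orthogonal resolution of the identity, $a+b+c+d+e+f=1$. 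Finally I would solve the combinatorial bookkeeping: match up which of $a,\dots,f$ sits inside which $P_{ij}$, consistent with all the row/column sum relations $\sum_j P_{ij}=\sum_i P_{ij}=1$, and check that the assignment is (up to relabeling) forced to be exactly the pattern displayed in the statement, i.e.
$$u=\begin{pmatrix} a+b&c+d&e+f\\ c+f&a+e&b+d\\ e+d&b+f&a+c\end{pmatrix}.$$
Conversely, one verifies directly that any such $u$ built from a partition of unity into rank-$1$ projections $a,\dots,f$ is a magic unitary satisfying the double flatness traces — this direction is a short computation.

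\textbf{Main obstacle.} The routine parts are the trace computations showing orthogonality; the delicate part is step (iii)–(iv): arguing that the six rank-$1$ pieces are \emph{globally} consistent, i.e. that the ``local'' decompositions $P_{ij}=(\text{sum of two rank-}1\text{ projections})$ can be performed simultaneously with a single set of six projections $a,\dots,f$, and that the resulting incidence pattern is rigid. The subtlety is that a priori each $P_{ij}P_{kl}$ gives a rank-$1$ projection, but one must check that the projection appearing in two different products (e.g. the piece of $P_{11}$ seen via $P_{22}$ versus the piece of $P_{11}$ seen via $P_{33}$, after using $P_{11}=P_{11}P_{22}+P_{11}P_{33}$) are genuinely two \emph{distinct} lines and that across all of $u$ only six lines occur — this is where the classical ($G=S_3$, commutative $C(S_3)$) input, equivalently the Tannakian conditions of Proposition 6.1, must be invoked to rule out spurious larger configurations. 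I would handle this by a careful case analysis on the $3\times 3$ grid of positions, organizing the six off-diagonal unordered position-pairs into the structure of the $6$ transpositions-free configurations, and reading off the matrix. Once the incidence pattern is pinned down, matching it to the displayed $u$ is immediate, and the converse check closes the proof.
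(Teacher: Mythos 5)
Your overall strategy --- reconstructing six rank-one projections directly from the products $P_{ij}P_{kl}$, using that the images commute because $C(S_3)$ is commutative --- is viable, and it amounts to re-proving from scratch the structure result that the paper simply quotes from \cite{bbs}: every representation of $C(S_3)$ comes from a magic matrix of the displayed shape, with six projections summing to $1$. The paper's proof is just that citation plus a rank count (flatness and $R=2$ force $K=6$, and the double flatness trace $\frac{1}{6}$ then forces each of $a,\ldots,f$ to be rank one). So your route is more laborious than needed, but not wrong in spirit; note in passing that $K=6$ is forced exactly, not merely $K\in 3\mathbb N$, since $R=2$ and $tr(P_{ij})=\frac13$.

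However, the identity at the heart of your step (i), as you instantiate it in the ``main obstacle'' paragraph, is false, and the difficulty you isolate there points in exactly the wrong direction. One does not have $P_{11}=P_{11}P_{22}+P_{11}P_{33}$: using the magic relations and commutativity, $P_{11}P_{22}=P_{11}P_{22}P_{33}=P_{11}P_{33}$, so your right-hand side equals $2P_{11}P_{22}P_{33}$, which is not even a projection. The correct decomposition multiplies $P_{11}$ by a full row (or column) of the complementary $2\times2$ block: $P_{11}=P_{11}(P_{21}+P_{22}+P_{23})=P_{11}P_{22}+P_{11}P_{23}$, since $P_{11}P_{21}=0$. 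Consequently the distinctness you plan to establish --- that the piece of $P_{11}$ ``seen via $P_{22}$'' and ``via $P_{33}$'' are two different lines --- is backwards: these two products are equal, and it is precisely this equality that collapses the $18$ products in general position to the six projections $p_\sigma=P_{\sigma(1)1}P_{\sigma(2)2}P_{\sigma(3)3}$, $\sigma\in S_3$, which are mutually orthogonal, sum to $1$ by the magic relations, satisfy $P_{ij}=\sum_{\sigma(j)=i}p_\sigma$, and have rank one by $tr(p_\sigma)=tr(P_{\sigma(1)1}P_{\sigma(2)2})=\frac16$. No Tannakian input or case analysis on the grid is needed for this; the magic relations plus commutativity do everything, and the incidence pattern $\sigma\mapsto\{(i,j):\sigma(j)=i\}$ is the displayed matrix. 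As written, your step (i) fails and your proposed resolution of (iii)--(iv) would chase a statement that is not true, so the argument needs this repair before it closes.
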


\begin{proof}
As explained in \cite{bbs}, the representations of $C(S_3)$ must appear from magic matrices as above, with $a,b,c,d,e,f$ being projections which sum up to 1. In the minimal case the common rank must be $R=3-1=2$, and this gives the result. 
\end{proof}

It is elementary to prove, based on this description, that the universal minimal doubly flat model for $C(S_3)$ is stationary, in the sense of \cite{ban}. However, finding analogues of this result for more general subgroups $G\subset S_N$, and especially for non-classical quantum groups $G\subset S_N^+$, is an open problem, that we would like to raise here.

\end{document}